\newtheorem{theorem}{Theorem}[section]
\newtheorem{remark}{Remark}[section]
\newtheorem{definition}{Definition}[section]
\newtheorem{example}{Example}[section]
\newtheorem{proposition}{Proposition}[section]
\begin{document}
\markboth{Viktor Lopatkin}
{Derivations of a Leavitt Path Algebra $W(n)$}

\title{DERIVATIONS OF A LEAVITT PATH ALGEBRA $W(\ell)$}

\author{\footnotesize VIKTOR LOPATKIN\footnote{wickktor@gmail.com}}

\maketitle

\abstract{The aim of this paper is to describe all inner and all outer derivations of Leavitt path algebra via explicit formulas.}

\section*{Introduction}
The Leavitt path algebras were developed by Gene Abrams and Arando Pino \cite{Lev} and Pere Ara, Mar\'ia A. Moreno and Enrique Pardo \cite{Lev2}. These algebras are an algebraic analog of graph Cuntz--Kreiger $C^*$-algebra.

\smallskip

\par In \cite{AraGood} it has been proved that the Leavitt path algebra and their generalizations are hereditary algebra. It follows that their homology vanish in higher degree. Pere Ara and Guillermo Corti\~nas \cite{AraCortinas} calculated the Hochschild homology of Leavitt path algebras. But they used a technique of spectral sequences, and from their results is not possible to get explicit formulas for derivations of these algebra. In this paper we will describe all derivations via an explicit formula. We will give a full description of the space of all inner and all outer derivations.

\smallskip

\par The main technique for the describing all derivations is based on the Gr\"obner--Shirshov  basis and the Composition--Diamond Lemma \cite{BokSurv}. If the Gr\"obner--Shirshov basis for an algebra $\Lambda$ is known, then a basis $\mathfrak{B}_\Lambda$ for this algebra is also known. It allows us to describe any value of a linear map $f:\Lambda \to \Lambda$ as a decomposition $f(\lambda) = \sum\limits_{x \in\mathfrak{B}_\Lambda}\xi_x(\lambda)x$ via basic elements; of course, in the infinite-dimensional case we have to assume that almost all scalars $\xi_x(\lambda)$ are zero. Since the derivation is a linear map which satisfies Leibnitz rule, we can use the Gr\"obner--Shirshov basis to find exactly the needed conditions for the scalars $\xi_x(\lambda)$. This is the main tool in this paper. The Gr\"obner--Shirshov basis of Leavitt path algebra $L(\Gamma)$ was found in \cite{Zel}.

\smallskip

\par The main results of this paper are Theorem \ref{innerder} and Theorem \ref{outerder} which describe all inner and all outer derivations of the Leavitt path algebra $W(\ell)$, respectively.

\section{Preliminaries}
Here we remind the definition of the Leavitt path algebra and the correspondence terminology.

\smallskip

\paragraph{Derivations.} Let $\mathfrak{A}$ be an arbitrary (non-associative) algebra over the ring $R$. A {\it derivation} $\mathscr{D}$ of $\mathfrak{A}$ is a linear map $\mathscr{D}:\mathfrak{A} \to \mathfrak{A}$ satisfying to Leibnitz rule,
\[
\mathscr{D}(xy) = \mathscr{D}(x)y + x \mathscr{D}(y)
\]
for any $x,y\in\mathfrak{A}$. It follows that any derivation $\mathscr{D}$ is uniquely determined by its values on the generators of algebra $\mathfrak{A}$. Moreover, let us assume that the basis $\mathfrak{B}_\mathfrak{A}$ of the algebra $\mathfrak{A}$ is given, say $\mathfrak{B}_\mathfrak{A} = \{b_j,\,j\in J\}$, then for any generator $x$, we can put
\[
\mathscr{D}(x) = \sum\limits_{b_i \in \mathfrak{B}_\mathfrak{A}}\xi_{b_i}(x)b_i,
\]
where $\xi_{b_i}(x) \in R$ are scalars, and almost all of them are zero.

\smallskip

\par Now let $\mathfrak{A}$ be an associative algebra, and let us fix some element $\lambda\in\mathfrak{A}$. The {\it inner derivation determined by} $\lambda$ is a linear map $\mathrm{ad}_\lambda:\mathfrak{A} \to \mathfrak{A}$ which is defined for any element $x \in \mathfrak{A}$ as follows:
\[
\mathrm{ad}_\lambda(x):=\lambda x - x \lambda.
\]

\smallskip

\par This allows to define any inner derivation $A$ for any generator $x$ as follows:
\[
A(x) = \sum\limits_{\lambda \in\mathfrak{A}}\zeta_\lambda \mathrm{ad}_\lambda(x),
\]
where almost all scalars $\zeta_\lambda\in R$ are zero.

\smallskip

\paragraph{Leavitt path algebra $L(\Gamma)$.} A directed graph $\Gamma = (V,E, \mathfrak{s}, \mathfrak{r})$ consists of two sets $V$ and $E$, called vertices and edges respectively, and two maps $\mathfrak{s}, \mathfrak{r}:E \to V$ called {\it source} and {\it range} (of edge) respectively. The graph is called {\it row-finite} if for all vertices $v \in V$, $|\mathfrak{s}^{-1}(v)| < \infty$. A vertex $v$ for which $\mathfrak{s}^{-1}(v)$ is empty is called a {\it sink}. A {\it path} $p = e_1\cdots e_\ell$ in a graph $\Gamma$ is a sequence of the edges $e_1, \ldots, e_\ell \in E$ such that $\mathfrak{r}(e_i) = \mathfrak{s}(e_{i+1})$ for $i = 1, \ldots, \ell -1$. In this case we say that the path $p$ {\it starts} at the vertex $\mathfrak{s}(e_1)$ and {\it ends} at the vertex $\mathfrak{r}(e_\ell)$,  and put $\mathfrak{s}(p): = \mathfrak{s}(e_1)$ and $\mathfrak{r}(p):=\mathfrak{r}(e_\ell)$. We also set $p_0 : = e_1$ and $p_z : = e_\ell$. Further, we will use the following notation: we set $p/p_0: = p'$ and $p/p_z = p''$, where the paths $p',p''$ can be defined as $p_0p' = p$ and $p''p_z = p$, respectively. In the case $p = p_0 \in E$ or $p = p_z \in E$, then we set $p/p_0:=\mathfrak{r}(p_0)$ and $p/p_z: = \mathfrak{s}(p_z)$, respectively.

\smallskip

\begin{definition} Let $\Gamma$ be a row-finite graph, and let $R$ be an associative ring with unit. The Leavitt path $R$-algebra $L_R(\Gamma)$ (or, shortly, $L(\Gamma)$) is the $R$-algebra given by the set of generators $\{v, v \in V\}$, $\{e,e^*| e \in E\}$ and the set of relations:
\begin{itemize}
\item[{\rm 1)}] $v_iv_j = \delta_{i,j}v_i$, for all $v_i,v_j \in V$;
\item[{\rm 2)}] $\mathfrak{s}(e)e = e\mathfrak{r}(e) = e,$ $\mathfrak{r}(e)e^* = e^* \mathfrak{s}(e) = e^*,$ for all $e \in E;$
\item[{\rm 3)}] $e^*f = \delta_{e,f}\mathfrak{r}(e),$ for all $e,f \in E;$
\item[{\rm 4)}] $v = \sum\limits_{\mathfrak{s}(e)=v}ee^*$, for an arbitrary vertex $v \in V \setminus\{\mathrm{sinks}\}.$
\end{itemize}
\end{definition}

\smallskip

\par For an arbitrary vertex $v \in V$ which is not a sink, choose and an edge $\varphi(v)\in E$ such that $\mathfrak{s}(\varphi(v)) = v$. We will refer to this edge as {\it special}. In other words, we fix a function $\varphi:V \setminus \{\mbox{sinks}\} \to E$ such that $\mathfrak{s}(\varphi(v)) = v$ for an arbitrary $v \in V \setminus \{\mbox{sinks}\}$.

\smallskip

\par In \cite{Zel} the Gr\"obner--Shirshov basis of the Leavitt path algebra $L(\Gamma)$ has been obtained with respect to the order $<$ on the set of generators $X = V \cup E \cup E^*$. This order is defined as follows: chose an arbitrary well-ordering on the set of vertices $V$. If $e$, $f$ are edges and $\mathfrak{s}(e) < \mathfrak{s}(f)$, then $e < f$. It remains to order the edges that have the same source. Let $v$ be a vertex which is not a sink. Let $e_1, \ldots, e_\ell$ be all the edges that originate from $v$. Suppose $e_1$ is a special edge. We order the edges as follows: $e_1 > e_2 >\ldots > e_\ell$. Choose an arbitrary well-ordering on the set $E^*$. For arbitrary elements $v\in V$, $e \in E^*$, $f^* \in E^*$, we let $v<e<f^*$. Thus the set $X = V \cup E \cup E^*$ is well-ordered.

\smallskip

\begin{theorem}{\cite[Theorem 1]{Zel}}\label{GSBLev}
The following elements form a basis of the Leavitt path algebra $L(\Gamma)${\rm :}
\begin{itemize}
\item[{\rm (i)}] the set of all vertices $V$,
\item[{\rm (ii)}] the set of all paths $\mathfrak{P}$,
\item[{\rm (iii)}] the set $\mathfrak{P}^* := \{p^*: p \in \mathfrak{P}\},$
\item[{\rm (iv)}] the set $\mathfrak{M}$ of words of the form $wh^*$, where $w = e_1\cdots e_n \in \mathfrak{P}$, $h^* = (f_1 \cdots f_m)^* = f_m^* \cdots f_1^* \in \mathfrak{P}^*$, $e_i,f_j \in E$, are paths that end at the same vertex $\mathfrak{r}(w) = \mathfrak{r}(h)$, with the condition that the edges $e_n$ and $f_m$ are either distinct or equal, but not special.
\end{itemize}
\end{theorem}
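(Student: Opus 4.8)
The plan is to realize $L(\Gamma)$ as the quotient of the free associative $R$-algebra $R\langle X\rangle$ on $X=V\cup E\cup E^*$ by the ideal generated by relations (1)--(4), and to prove that, once these relations are oriented as rewriting rules with respect to the given monomial order, they constitute a Gr\"obner--Shirshov basis $S$. The Composition--Diamond Lemma then guarantees that the $S$-reduced monomials (those avoiding every leading word) form an $R$-basis, and it remains only to check that these reduced monomials are exactly the words listed in (i)--(iv). First I would fix a degree-compatible order on monomials induced by the generator order $v<e<f^*$ and read off the leading word of each relation: $v_iv_j$ from (1); the four words $\mathfrak{s}(e)e$, $e\mathfrak{r}(e)$, $\mathfrak{r}(e)e^*$, $e^*\mathfrak{s}(e)$ from (2); the word $e^*f$ from (3); and, rewriting (4) for a non-sink $v$ with special edge $e=\varphi(v)$ as $ee^*=v-\sum_{\mathfrak{s}(f)=v,\,f\neq e}ff^*$, the leading word $ee^*$ with $e$ special.

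The heart of the argument, and the step I expect to be the main obstacle, is verifying that all compositions of these leading words are trivial modulo $S$. I would organize the overlaps by type. The purely vertex overlaps ($v_iv_j$ against $v_jv_k$) and the vertex--edge overlaps ($\mathfrak{s}(e)e$ against $e\mathfrak{r}(e)$ or against a special $ee^*$, together with the starred analogues) reduce routinely using idempotency and the source/range absorption rules. The genuinely delicate compositions are those pairing the Cuntz--Krieger relation (4) with relation (3): the overlap word $ee^*g$ (with $e$ special, $g\in E$) can be rewritten either by first applying $ee^*\mapsto v-\sum_{f\neq e}ff^*$ on the left or by first applying $e^*g\mapsto\delta_{e,g}\mathfrak{r}(e)$ on the right, and I would check that these two reductions agree in each case $g=e$, $\mathfrak{s}(g)=v$ with $g\neq e$, and $\mathfrak{s}(g)\neq v$. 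This is precisely the consistency between (4) and (3), and the choice of a single special edge per vertex is what makes the orientation unambiguous. I would then confirm that no composition produces a new leading word, so the rewriting system is already complete.

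Granting the Gr\"obner--Shirshov property, the final step identifies the reduced monomials, i.e.\ those containing none of the leading words above. The relation $v_iv_j$ forbids two vertices in a row, while the absorption rules $\mathfrak{s}(e)e=e$, $e\mathfrak{r}(e)=e$, $\mathfrak{r}(e)e^*=e^*$, $e^*\mathfrak{s}(e)=e^*$ eliminate every vertex adjacent to an edge; hence a reduced word of length at least two uses only letters from $E\cup E^*$, with consecutive unstarred (resp.\ starred) edges forced to be composable as a path. The prohibition of $e^*f$ means no starred letter is immediately followed by an unstarred one, so all unstarred edges precede all starred edges. Thus a reduced monomial is a single vertex, a path $w\in\mathfrak{P}$, a starred path $h^*\in\mathfrak{P}^*$, or a mixed word $wh^*=e_1\cdots e_nf_m^*\cdots f_1^*$ with $\mathfrak{r}(w)=\mathfrak{r}(h)$. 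Such a mixed word avoids the leading word $ee^*$ of (4) at its junction $e_nf_m^*$ precisely when $e_n$ and $f_m$ are not simultaneously equal and special --- which is exactly the stated condition that $e_n$ and $f_m$ be distinct, or equal but not special. This reproduces lists (i)--(iv) and completes the proof, the row-finiteness hypothesis ensuring the sum in (4) is finite so that every rewriting is well-defined.
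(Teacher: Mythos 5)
The paper itself offers no proof of this statement---it is quoted from \cite{Zel}---so the only internal point of comparison is the explicit Gr\"obner--Shirshov basis displayed immediately after the theorem, and that display already exposes the gap in your argument. Your overall strategy (orient the defining relations, check compositions, invoke the Composition--Diamond Lemma, read off the irreducible words) is the right one, and your analysis of the overlap $ee^*g$ between relations (4) and (3) is correct. The error is the claim that ``no composition produces a new leading word, so the rewriting system is already complete.'' It is not. Take $v\in V$ with $v\ne\mathfrak{s}(e)$: the monomial $ve$ contains none of your leading words ($v_iv_j$, $\mathfrak{s}(e)e$, $e\mathfrak{r}(e)$, $\mathfrak{r}(e)e^*$, $e^*\mathfrak{s}(e)$, $e^*f$, $\varphi(u)\varphi(u)^*$), so it is irreducible for your system, yet $ve\equiv v\,\mathfrak{s}(e)e\equiv(v\mathfrak{s}(e))e\equiv 0$ in $L(\Gamma)$. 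Concretely, the intersection composition of $v_iv_j-\delta_{i,j}v_i$ with $\mathfrak{s}(e)e-e$ on the word $v_i\mathfrak{s}(e)e$ equals $(1-\delta_{i,j})v_ie$, which is irreducible and nonzero; the composition is non-trivial and forces the new rule $ve=\delta_{v,\mathfrak{s}(e)}e$. Iterating, completion produces the further leading words $ev$, $ve^*$, $e^*v$, and then $ef$ (for $\mathfrak{r}(e)\ne\mathfrak{s}(f)$), $e^*f^*$ (for $\mathfrak{r}(f)\ne\mathfrak{s}(e)$) and $ef^*$ (for $\mathfrak{r}(e)\ne\mathfrak{r}(f)$)---precisely the extra relations the paper lists in its description of the Gr\"obner--Shirshov basis.

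This is not a cosmetic omission, because your identification of the irreducible monomials silently uses the added rules. With only the naively oriented defining relations, nothing forbids a vertex adjacent to a non-matching edge, nothing forces consecutive edges of an irreducible word to be composable (so item (ii) would not be the set of paths), and nothing imposes $\mathfrak{r}(w)=\mathfrak{r}(h)$ in item (iv); the set of irreducible words would be strictly larger than (i)--(iv), and the Composition--Diamond Lemma would then certify the wrong basis. To repair the proof you must either run the completion to termination or verify directly that the enlarged set displayed in the paper is closed under composition (which requires re-checking the overlaps of the new length-two leading words with $e^*f$ and with $\varphi(v)\varphi(v)^*$); your final normal-form analysis is then correct, but only for that completed system.
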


\smallskip

\par Let us describe the Gr\"obner--Shirshov basis for the Leavitt path algebra $L(\Gamma)$:
\begin{align*}
& vu = \delta_{v,u}v,\\
& ve = \delta_{v,\mathfrak{s}(e)}e, \quad ev = \delta_{v, \mathfrak{r}(e)}e,\\
&ve^* = \delta_{v, \mathfrak{r}(e)}e^*, \quad e^*v = \delta_{v, \mathfrak{s}(e)}e^*,\\
&ef^* = \delta_{e,f}\mathfrak{r}(e), \quad \sum\limits_{\mathfrak{s}(e) = v}ee^* = v,\\
&ef = \delta_{\mathfrak{r}(e), \mathfrak{s}(f)}ef, \quad e^*f^* = \delta_{\mathfrak{r}(f), \mathfrak{s}(e)}e^*f^*, \quad ef^* = \delta_{\mathfrak{r}(e),\mathfrak{r}(f)}ef^*,
\end{align*}
here $v,u \in V$ and $e,f \in E$.

\smallskip

\paragraph{Leavitt path algebra $W(\ell)$.} Let $\ell \ge 1$, and let us consider the graph $O_\ell$ with $\ell$ loops (see Fig.\ref{loopgraph}). The correspondence Leavitt path algebra $L(O_\ell)$ is denoted by $W(\ell)$.
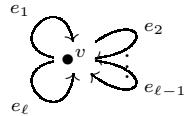
\begin{figure}[h!]
\[
\xymatrix{
\bullet^v \ar@(l,u)^{e_1} \ar@(ur,r)^{e_2}_(.4)\vdots \ar@(r,dr)^{e_{\ell-1}} \ar@(l,d)_{e_\ell}
}
\]
\caption{Here the graph $O_\ell$ is shown. The correspondence Leavitt path algebra is denoted by $W(\ell)$.}\label{loopgraph}
\end{figure}

\smallskip

\par Let us denote by $\Omega$ the set of all paths of the graph $O_\ell$. Then, for the Leavitt path algebra $W(\ell)$, the Theorem \ref{GSBLev} has the following form

\smallskip

\begin{theorem}\label{GSBW}
The following elements form a basis of the Leavitt path algebra $W(\ell)$:
\begin{itemize}
\item[(i)] the vertex $\{v\}$,
\item[(ii)] the set of all paths $\Omega$,
\item[(iii)] the set $\{p^*: p\in \Omega\} = \Omega^*$,
\item[(iv)] the set $\mathfrak{M}$ of words of the form $wh^*$, where $w = e_1\cdots e_n \in \mathfrak{P}$, $h^* = (f_1 \cdots f_m)^* = f_m^* \cdots f_1^* \in \mathfrak{P}^*$, $e_i,f_j \in E$, are paths that end at the same vertex $\mathfrak{r}(w) = \mathfrak{r}(h)$, with the condition that the if edges $e_n$ and $f_m$ are either distinct or equal, but not special.
\end{itemize}
\par The Gr\"obner--Shirshov basis of the Leavitt path algebra $W(\ell)$, can be described as follows:
\begin{align}
&vv=v,\label{GSB1}\\
&ve = ev= e, & e \in E,\\
&ve^* = e^*v = e^*, & e^* \in E^*,\\
&e^*f = \delta_{e,f}v & e,f \in E,\\
&\sum\limits_{i=1}^\ell e_ie_i^* = v, & e_1, \ldots, e_\ell \in E.\label{GSBl}
\end{align}
\end{theorem}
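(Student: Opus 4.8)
The plan is to obtain Theorem \ref{GSBW} as the specialization of Theorem \ref{GSBLev} to the one-vertex graph $O_\ell$, so that nearly all of the work reduces to recording the combinatorial data of $O_\ell$ and simplifying the Kronecker deltas. First I would fix that data: the vertex set is $V=\{v\}$, the edge set is $E=\{e_1,\dots,e_\ell\}$, and since every $e_i$ is a loop we have $\mathfrak{s}(e_i)=\mathfrak{r}(e_i)=v$ for all $i$; in particular $v$ is not a sink. I would then declare $e_1$ to be the special edge and fix the ordering $e_1>e_2>\dots>e_\ell$, which is permitted by the ordering recipe stated just before Theorem \ref{GSBLev}.

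With this data the basis statement is immediate. Item (i) of Theorem \ref{GSBLev} reduces to $\{v\}$; since every concatenation $e_{i_1}\cdots e_{i_n}$ of loops is again a legitimate path, we have $\mathfrak{P}=\Omega$, which yields items (ii) and (iii). For item (iv) the requirement $\mathfrak{r}(w)=\mathfrak{r}(h)$ holds automatically because all paths end at $v$, so the only surviving restriction on a word $wh^*=e_1\cdots e_n f_m^*\cdots f_1^*$ is the tail condition that $e_n$ and $f_m$ be distinct, or equal but not the special edge $e_1$. This is precisely the set $\mathfrak{M}$ as stated.

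Next I would substitute the same data into the Gr\"obner--Shirshov relations listed after Theorem \ref{GSBLev}. The vertex relation $vu=\delta_{v,u}v$ becomes $vv=v$. In $ve=\delta_{v,\mathfrak{s}(e)}e$, $ev=\delta_{v,\mathfrak{r}(e)}e$ and their starred analogues every delta equals $1$, giving $ve=ev=e$ and $ve^*=e^*v=e^*$. The relation $e^*f=\delta_{e,f}\mathfrak{r}(e)$ becomes $e^*f=\delta_{e,f}v$, and $\sum_{\mathfrak{s}(e)=v}ee^*=v$ becomes $\sum_{i=1}^{\ell}e_ie_i^*=v$. Finally the composability relations $ef=\delta_{\mathfrak{r}(e),\mathfrak{s}(f)}ef$, $e^*f^*=\delta_{\mathfrak{r}(f),\mathfrak{s}(e)}e^*f^*$ and $ef^*=\delta_{\mathfrak{r}(e),\mathfrak{r}(f)}ef^*$ all have delta $1$ and collapse to the trivial identities $ef=ef$, $e^*f^*=e^*f^*$, $ef^*=ef^*$; they impose no constraint and merely confirm that arbitrary concatenations of loops remain normal words. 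What is left is exactly the list \eqref{GSB1}--\eqref{GSBl}.

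Because Theorem \ref{GSBLev} already asserts that this relation set is a Gr\"obner--Shirshov basis for $L(\Gamma)$, the Gr\"obner--Shirshov property for $W(\ell)=L(O_\ell)$ is inherited with no new composition to check, and the basis (i)--(iv) is then the set of normal (irreducible) words by the Composition--Diamond Lemma. I therefore expect no deep obstacle; the only genuine care needed is the bookkeeping of the deltas and the correct exclusion of the special edge in item (iv). Should one instead want a self-contained argument, the single-vertex structure makes the direct verification short: the essential ambiguities are those overlapping the leading monomial $e_1e_1^*$ of $\sum_{i=1}^{\ell}e_ie_i^*=v$ with the relation $e^*f=\delta_{e,f}v$, for instance on the words $e_1e_1^*f$ and $f^*e_1e_1^*$, and checking that these compositions reduce to zero is the one calculation I would carry out in full. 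I would also note that, in contrast to the general case, $W(\ell)$ is unital with identity $1=v$.
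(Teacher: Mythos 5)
Your proposal is correct and follows essentially the same route as the paper, which states Theorem \ref{GSBW} as the direct specialization of Theorem \ref{GSBLev} (quoted from \cite{Zel}) to the one-vertex graph $O_\ell$, with all Kronecker deltas collapsing to $1$ and the composability relations becoming vacuous. The paper offers no further argument, so your extra remarks on checking the compositions overlapping $e_1e_1^*$ go slightly beyond what the paper records, but they are consistent with it.
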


\smallskip

\begin{proposition}
Any derivation $\mathscr{D}$ of the Leavitt path algebra $W(\ell)$ satisfies the following equations:
\begin{align*}
&\mathscr{D}(v)v + v\mathscr{D}(v) = \mathscr{D}(v),\\
&\mathscr{D}(v)e + v\mathscr{D}(e) = \mathscr{D}(e)v + e\mathscr{D}(v) = \mathscr{D}(e), &e \in E,\\
&\mathscr{D}(v)e^* + v\mathscr{D}(e^*) = \mathscr{D}(e^*)v + e^* \mathscr{D}(v) = \mathscr{D}(e^*), & e^* \in E^*,\\
&\mathscr{D}(e^*)f + e^*\mathscr{D}(f) = \delta_{e,f}\mathscr{D}(v), & e,f \in E,\\
&\sum\limits_{i=1}^\ell\Bigl(\mathscr{D}(e_i)e_i^* + e_i\mathscr{D}(e_i^*)\Bigr) = \mathscr{D}(v), & e_1,\ldots,e_\ell\in E.
\end{align*}
\end{proposition}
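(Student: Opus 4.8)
The plan is simply to apply the derivation $\mathscr{D}$ to each of the five defining relations \eqref{GSB1}--\eqref{GSBl} of $W(\ell)$ and expand using the Leibniz rule. The key observation is that $\mathscr{D}$ is a well-defined linear endomorphism of the algebra $W(\ell)$ itself (not of the underlying free algebra), so every equality that holds among elements of $W(\ell)$ --- in particular each of the defining relations --- is preserved under $\mathscr{D}$. Thus from $x = y$ in $W(\ell)$ one may conclude $\mathscr{D}(x) = \mathscr{D}(y)$, and the Leibniz rule rewrites the derivative of each product as a sum of two terms.

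First I would treat the idempotent relation $vv = v$: applying $\mathscr{D}$ and Leibniz gives $\mathscr{D}(v)v + v\mathscr{D}(v) = \mathscr{D}(v)$. Next, applying $\mathscr{D}$ to the two relations $ve = e$ and $ev = e$ yields $\mathscr{D}(v)e + v\mathscr{D}(e) = \mathscr{D}(e)$ and $\mathscr{D}(e)v + e\mathscr{D}(v) = \mathscr{D}(e)$, which combine into the stated double equality; the starred relations $ve^* = e^*$ and $e^*v = e^*$ are handled identically to produce the corresponding line for $e^*$.

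For the relation $e^*f = \delta_{e,f}v$ I would apply $\mathscr{D}$ to the left side to get $\mathscr{D}(e^*)f + e^*\mathscr{D}(f)$, while linearity of $\mathscr{D}$ sends the right side $\delta_{e,f}v$ to $\delta_{e,f}\mathscr{D}(v)$, giving the fourth equation. Finally, for the defining relation $\sum_{i=1}^{\ell} e_ie_i^* = v$, linearity lets me differentiate term by term and Leibniz turns each $\mathscr{D}(e_ie_i^*)$ into $\mathscr{D}(e_i)e_i^* + e_i\mathscr{D}(e_i^*)$, yielding the last equation.

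There is essentially no obstacle here beyond bookkeeping: the content of the proposition is precisely that a derivation must be compatible with the presentation, and this compatibility is automatic once one records that $\mathscr{D}$ acts on the quotient algebra and obeys Leibniz on products of generators. The only point requiring a moment's care is the scalar $\delta_{e,f}$ in the fourth relation, where one uses the $R$-linearity of $\mathscr{D}$ to pull the scalar through.
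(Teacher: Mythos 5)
Your proposal is correct and matches the paper's argument exactly: the paper's proof is a one-line appeal to the definition of a derivation together with the relations of Theorem \ref{GSBW}, which is precisely the ``apply $\mathscr{D}$ and Leibniz to each defining relation'' computation you spell out. No further comment is needed.
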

\begin{proof}
It immediately follows from the definition of derivation and the Theorem \ref{GSBW}.
\end{proof}

\smallskip

\begin{remark}
Without loss of generality we have assumed that $e_1$ is the special edge.
\end{remark}

\section{Derivations of the Leavitt path algebra}

Since the basis of the Leavitt path algebra $W(\ell)$ has been described, then any value $\mathscr{D}(x)$ of any linear map $\mathscr{D}: W(\ell) \to W(\ell)$, can be presented as follows
\[
\mathscr{D}(x) = \alpha_v(x)v + \sum\limits_{p \in \Omega}\Bigl(\beta_p(x)p+ \gamma_p(x)p^* \Bigr) + \sum\limits_{wh^* \in \mathfrak{M}}\rho_{wh^*}(x)wh^*.
\]
Here $x \in \{v\}\cup E \cup E^*$, and almost all scalars $\alpha,\beta,\gamma,\rho \in R$ are zero.

\smallskip

\begin{proposition} Let $\mathscr{D}$ be a derivation of Leavitt path algebra $W(\ell)$. Then
$\mathscr{D}(v) = 0.$
\end{proposition}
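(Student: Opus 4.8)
The key observation is that $v$ is the identity element of $W(\ell)$, since the single vertex $v$ acts as a unit on every generator by relations~\eqref{GSB1} and the following two lines. Thus $v^2 = v$, and applying $\mathscr{D}$ gives $\mathscr{D}(v)v + v\mathscr{D}(v) = \mathscr{D}(v)$, which is exactly the first equation of the preceding Proposition. The plan is to expand $\mathscr{D}(v)$ in the basis of Theorem~\ref{GSBW} and feed it through this single relation, extracting a condition on each family of coefficients $\alpha_v(v)$, $\beta_p(v)$, $\gamma_p(v)$, $\rho_{wh^*}(v)$.

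First I would write
\[
\mathscr{D}(v) = \alpha_v(v)v + \sum_{p\in\Omega}\bigl(\beta_p(v)p + \gamma_p(v)p^*\bigr) + \sum_{wh^*\in\mathfrak{M}}\rho_{wh^*}(v)wh^*,
\]
and then compute $\mathscr{D}(v)\cdot v$ and $v\cdot\mathscr{D}(v)$ termwise. Because $v$ is the unit, left and right multiplication by $v$ fix each basis element: $vp = pv = p$, $vp^* = p^*v = p^*$, $v(wh^*) = (wh^*)v = wh^*$, and $v\cdot v = v$. Hence both $\mathscr{D}(v)v$ and $v\mathscr{D}(v)$ equal $\mathscr{D}(v)$ itself, so the left-hand side of the relation becomes $\mathscr{D}(v) + \mathscr{D}(v) = 2\mathscr{D}(v)$.

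The relation therefore collapses to $2\mathscr{D}(v) = \mathscr{D}(v)$, that is, $\mathscr{D}(v) = 0$, and comparing coefficients of the (linearly independent) basis elements forces $\alpha_v(v) = \beta_p(v) = \gamma_p(v) = \rho_{wh^*}(v) = 0$ for all relevant indices. The only place I expect subtlety is justifying that multiplication by $v$ genuinely fixes every basis element and that the cancellation is valid over an arbitrary ring $R$; this is why the argument does not rely on dividing by $2$ but rather on subtracting $\mathscr{D}(v)$ from both sides of $2\mathscr{D}(v)=\mathscr{D}(v)$, which is valid in any abelian group. Once that is in hand the conclusion is immediate.
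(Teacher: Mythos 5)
Your proof is correct and follows essentially the same route as the paper: both apply the Leibniz identity to $v=v\cdot v$ and use the fact that left and right multiplication by $v$ fix $\mathscr{D}(v)$, yielding $2\mathscr{D}(v)=\mathscr{D}(v)$ and hence $\mathscr{D}(v)=0$. The only cosmetic difference is that the paper verifies $\mathscr{D}(v)v=v\mathscr{D}(v)=\mathscr{D}(v)$ term by term in the basis of Theorem~\ref{GSBW}, whereas you invoke directly that $v$ is the unit of $W(\ell)$.
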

\begin{proof}
Using the equation $\mathscr{D}(v)v + v \mathscr{D}(v) = \mathscr{D}(v)$, we get
\begin{eqnarray*}
\mathscr{D}(v)v &=& \left(\alpha_v(v)v + \sum\limits_{p \in \Omega}\Bigl(\beta_p(v)p+ \gamma_p(v)p^* \Bigr) + \sum\limits_{wh^* \in \mathfrak{M}}\rho_{wh^*}(v)wh^* \right)v = \\&& = \alpha_v(v)v + \sum\limits_{p \in \Omega}\Bigl(\beta_p(v)p+ \gamma_p(v)p^* \Bigr) + \sum\limits_{wh^* \in \mathfrak{M}}\rho_{wh^*}(v)wh^*
\end{eqnarray*}
and
\begin{eqnarray*}
v\mathscr{D}(v) &=& v\left(\alpha_v(v)v + \sum\limits_{p \in \Omega}\Bigl(\beta_p(v)p+ \gamma_p(v)p^* \Bigr) + \sum\limits_{wh^* \in \mathfrak{M}}\rho_{wh^*}(v)wh^* \right) = \\&& = \alpha_v(v)v + \sum\limits_{p \in \Omega}\Bigl(\beta_p(v)p+ \gamma_p(v)p^* \Bigr) + \sum\limits_{wh^* \in \mathfrak{M}}\rho_{wh^*}(v)wh^*.
\end{eqnarray*}
It follows $\mathscr{D}(v) = 0$, as claimed.
\end{proof}

\smallskip

\par It follows that we can rewrite all equations determining the derivation in the Leavitt path algebra in the following form

\begin{align*}
&v\mathscr{D}(e)v = \mathscr{D}(e)\\
&v\mathscr{D}(e^*)v = \mathscr{D}(e^*)\\
&\mathscr{D}(e_i^*)e_j+e_i^*\mathscr{D}(e_j) = 0, \quad 1 \le i,j \le \ell\\
&\sum\limits_{i=1}^\ell\Bigl(\mathscr{D}(e_i)e^*_i+e_i\mathscr{D}(e_i^*)\Bigr) = 0.
\end{align*}

\smallskip

\begin{theorem}\label{genth}
Any derivation $\mathscr{D}$ of the Leavitt path algebra $W(\ell)$ can be described as
\[
\mathscr{D}(x) = \begin{cases}0, \mbox{ if $x =v$},\\
\alpha_v(x)v + \sum\limits_{p\in\Omega}\Bigl(\beta_p(x)p+\gamma_p(x)p^*\Bigr)+\sum\limits_{wh^*\in\mathfrak{M}}\rho_{wh^*}(x)wh^*,\mbox{ if $x \in E\cup E^*$,}
\end{cases}
\]
where almost all scalars $\alpha(x),\beta(x), \gamma(x), \rho(x)\in R$ are zero, and they satisfy the following equations
\begin{align}
&\gamma_{e_j}(e_i^*) + \beta_{e_i}(e_j) = 0,\label{e*e1}\\
&\beta_p(e_i^*) + (1-\delta_{1,j})\rho_{pe_je_j^*}(e_i^*) + \beta_{e_ipe_j}(e_j) = 0,& p\in \Omega,\label{e*e2}\\
&\rho_{pe_j^*}(e_i^*) + \beta_{e_ip}(e_j) = 0, & p\in\Omega,\,p_z \ne e_j, \label{e*e3}\\
&\alpha_v(e_i^*) +(1-\delta_{1,j})\rho_{e_je_j^*}(e_i^*) + \beta_{e_ie_j}(e_j) = 0,\label{e*e4}\\
&\gamma_{e_jpe_i}(e_i^*) + \gamma_p(e_j) + (1-\delta_{1,i})\rho_{e_i(pe_i)^*}(e_j) = 0, & p\in \Omega,\label{e*e5}\\
& \gamma_{e_jp}(e_i^*)+\rho_{e_ip^*}(e_j)=0,& p\in\Omega,\,p_z \ne e_i,\label{e*e6}\\
&\alpha_v(e_j)+\gamma_{e_je_i}(e_i^*) + (1-\delta_{1,i})\rho_{e_ie_i^*}(e_j) = 0,\label{e*e7}\\
&\rho_{w(e_jh)^*}(e_i^*) + \rho_{e_iwh^*}(e_j)=0,& wh^* \in\mathfrak{M},\label{e*e8}
\end{align}
for any $1\le i,j\le\ell$.
\end{theorem}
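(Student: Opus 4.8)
The plan is to use that, by the preceding proposition, $\mathscr{D}(v)=0$, so the first branch of the asserted formula is immediate, while the displayed expansion of $\mathscr{D}(x)$ for $x\in E\cup E^{*}$ is merely the decomposition of an arbitrary element of $W(\ell)$ along the basis of Theorem~\ref{GSBW}. The real content of the theorem is thus the list of linear relations \eqref{e*e1}--\eqref{e*e8} satisfied by the coefficients, and I would extract these by inserting the general expansion into the reduced defining relations. Because the unique vertex $v$ acts as an identity on every basis word, the equations $v\mathscr{D}(e)v=\mathscr{D}(e)$ and $v\mathscr{D}(e^{*})v=\mathscr{D}(e^{*})$ hold automatically and impose no condition; every constraint in the statement will come from the single family $\mathscr{D}(e_i^{*})e_j+e_i^{*}\mathscr{D}(e_j)=0$, $1\le i,j\le\ell$.

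The key step is to reduce the two products $e_i^{*}\mathscr{D}(e_j)$ and $\mathscr{D}(e_i^{*})e_j$ to normal form by means of the Gr\"obner--Shirshov rules \eqref{GSB1}--\eqref{GSBl}. On the left factor one applies $e_i^{*}v=e_i^{*}$, $e_i^{*}p=\delta_{e_i,p_0}\,(p/p_0)$ on a path, $e_i^{*}p^{*}=(pe_i)^{*}$ on a co-path, and $e_i^{*}(wh^{*})=\delta_{e_i,w_0}\,(w/w_0)h^{*}$ on a mixed word; dually, on the right factor one applies $ve_j=e_j$, $p\mapsto pe_j$ on a path, $p^{*}e_j=\delta_{e_j,p_0}\,(p/p_0)^{*}$ on a co-path, and $(wh^{*})e_j=\delta_{e_j,h_0}\,w\,(h/h_0)^{*}$ on a mixed word. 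Each output is again a basis element of one of the four types of Theorem~\ref{GSBW}, so I can read off the coefficient of every basis word in $e_i^{*}\mathscr{D}(e_j)+\mathscr{D}(e_i^{*})e_j$ and equate it to zero. Organising the result by the type of the output word produces precisely the eight identities: the coefficient of $v$ gives \eqref{e*e1}; the coefficient of the path $e_j$ itself gives \eqref{e*e4}, that of a longer path ending in $e_j$ gives \eqref{e*e2}, and that of any path not ending in $e_j$ gives \eqref{e*e3}; dually, the coefficient of the co-path $e_i^{*}$ itself gives \eqref{e*e7}, that of a longer co-path whose underlying path ends in $e_i$ gives \eqref{e*e5}, and that of any co-path whose underlying path does not end in $e_i$ gives \eqref{e*e6}; finally the coefficient of a genuine mixed word gives \eqref{e*e8}.

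The step I expect to demand the most care is the appearance of the factors $(1-\delta_{1,j})$ and $(1-\delta_{1,i})$. These encode condition (iv) of Theorem~\ref{GSBW}: a word $wh^{*}$ is a basis element only when its terminal edges $w_z,h_z$ are distinct, or equal but \emph{not} special, since for the special edge $e_1$ the product $e_1e_1^{*}$ is not reduced but is rewritten through $\sum_{k}e_ke_k^{*}=v$. Hence, whenever a contribution such as $\rho_{we_j^{*}}(e_i^{*})$ or $\rho_{e_i(pe_i)^{*}}(e_j)$ would force a repeated special terminal edge, the corresponding coefficient is not a legitimate basis datum and must be dropped; the Kronecker factors are exactly this suppression. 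Once this bookkeeping---tracking, for each output word, whether its last path-side edge and its last co-path-side edge coincide and are special---is carried out consistently, matching coefficients yields the eight equations and completes the proof.
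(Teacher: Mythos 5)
Your derivation of the eight equations \eqref{e*e1}--\eqref{e*e8} from the family $\mathscr{D}(e_i^{*})e_j+e_i^{*}\mathscr{D}(e_j)=0$ is sound and matches the first half of the paper's proof: the reduction rules you list are the right ones, the case split by the type of the output basis word produces exactly the stated equations, and your explanation of the Kronecker factors $(1-\delta_{1,j})$, $(1-\delta_{1,i})$ as the suppression of illegitimate words $we_1^{*}$, $e_1h^{*}$ with a repeated special terminal edge is correct.

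However, there is a genuine gap: you assert that \emph{every} constraint comes from the single family $\mathscr{D}(e_i^{*})e_j+e_i^{*}\mathscr{D}(e_j)=0$, but the algebra has a fifth defining relation, $\sum_{i=1}^{\ell}e_ie_i^{*}=v$ (rule \eqref{GSBl}), and the Leibniz rule applied to it yields the further condition $\sum_{i=1}^{\ell}\bigl(\mathscr{D}(e_i)e_i^{*}+e_i\mathscr{D}(e_i^{*})\bigr)=0$. This is not automatically satisfied and is not visibly a consequence of the others; a priori it could impose additional equations on the scalars, in which case the list \eqref{e*e1}--\eqref{e*e8} would not be a complete description and the theorem (as it is used later, e.g.\ to construct derivations by prescribing scalars and to classify inner and outer derivations) would fail. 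The bulk of the paper's proof is devoted precisely to expanding this sum in normal form --- which is where the rewriting $e_1e_1^{*}=v-\sum_{k\ge 2}e_ke_k^{*}$ genuinely enters the computation, not merely as bookkeeping for the basis --- and to checking, coefficient by coefficient, that each resulting equation is already implied by \eqref{e*e1}--\eqref{e*e8}. Your proposal omits this verification entirely, so it establishes only that a derivation's scalars satisfy the eight equations, not that these equations are the whole set of constraints; you need to add the analysis of $\sum_{i}\bigl(\mathscr{D}(e_i)e_i^{*}+e_i\mathscr{D}(e_i^{*})\bigr)=0$ to close the argument.
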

\begin{proof}
We have
\begin{eqnarray*}
v\mathscr{D}(e)v = \mathscr{D}(e)  &=& v \left( \alpha_v(e)v + \sum\limits_{p \in \Omega}\Bigl(\beta_p(e)p+ \gamma_p(e)p^* \Bigr) + \sum\limits_{wh^* \in \mathfrak{M}}\rho_{wh^*}(e)wh^*\right)v = \\&&=\alpha_v(e)+\sum\limits_{p \in \Omega}\Bigl(\beta_p(e)p+ \gamma_p(e)p^* \Bigr) + \sum\limits_{wh^* \in \mathfrak{M}}\rho_{wh^*}(e)wh^*.
\end{eqnarray*}

\smallskip

\par Further,
\begin{eqnarray*}
v\mathscr{D}(e^*)v = \mathscr{D}(e^*)  &=& v \left( \alpha_v(e^*)v + \sum\limits_{p \in \Omega}\Bigl(\beta_p(e^*)p+ \gamma_p(e^*)p^* \Bigr) + \sum\limits_{wh^* \in \mathfrak{M}}\rho_{wh^*}(e^*)wh^*\right)v = \\&&=\alpha_v(e)v+\sum\limits_{p \in \Omega}\Bigl(\beta_p(e^*)p+ \gamma_p(e^*)p^* \Bigr) + \sum\limits_{wh^* \in \mathfrak{M}}\rho_{wh^*}(e^*)wh^*.
\end{eqnarray*}

\smallskip

\par Let us consider for any $1 \le i,j \le \ell$ the equations $\mathscr{D}(e_i^*)e_j+e_i^*\mathscr{D}(e_j) = 0$. We have:
\begin{eqnarray*}
\mathscr{D}(e_i^*)e_j &=& \left(\alpha_v(e_i^*)v+\sum\limits_{p \in \Omega}\Bigl(\beta_p(e_i^*)p+ \gamma_p(e_i^*)p^* \Bigr) + \sum\limits_{wh^* \in \mathfrak{M}}\rho_{wh^*}(e_i^*)wh^*\right)e_j =\\& =& \alpha_v(e_i^*)e_j+ \sum\limits_{p \in \Omega}\beta_p(e_i^*)pe_j + \sum\limits_{p \in \Omega}\gamma_{e_jp}(e_i^*)p^* + \gamma_{e_j}(e_i^*)v + \\&&+ \sum\limits_{we_j^* \in \mathfrak{M}}\rho_{we_j^*}(e_i^*)w + \sum\limits_{wh^*\in\mathfrak{M}}\rho_{w(e_jh)^*}(e_i^*)wh^*,
\end{eqnarray*}
and
\begin{eqnarray*}
e_i^*\mathscr{D}(e_j) &=& e_i^*\left(\alpha_v(e_j)v+\sum\limits_{p \in \Omega}\Bigl(\beta_p(e_j)p+ \gamma_p(e_j)p^* \Bigr) + \sum\limits_{wh^* \in \mathfrak{M}}\rho_{wh^*}(e_j)wh^*\right) =\\& =&\alpha_v(e_j)e_i^*+\beta_{e_i}(e_j)v + \sum\limits_{p \in \Omega}\beta_{e_ip}(e_j)p + \sum\limits_{p \in \Omega}\gamma_p(e_j)(pe_i)^* + \\&&+ \sum\limits_{e_ih^*\in \mathfrak{M}}\rho_{e_ih^*}(e_j)h^* + \sum\limits_{wh^*\in\mathfrak{M}}\rho_{e_iwh^*}(e_j)wh^*.
\end{eqnarray*}

\smallskip

\par Let us add up similar terms:
\begin{eqnarray*}
\Bigl.\mathscr{D}(e_i^*)e_j + e_i^* \mathscr{D}(e_j)\Bigr|_{\{v\}} & = & \gamma_{e_j}(e_i^*)v + \beta_{e_i}(e_j)v,
\end{eqnarray*}
\begin{eqnarray*}
\Bigl.\mathscr{D}(e_i^*)e_j + e_i^* \mathscr{D}(e_j)\Bigr|_\Omega & = &\alpha_v(e_i^*)e_j+ \sum\limits_{p \in \Omega}\beta_p(e_i^*)pe_j + \sum\limits_{we_j^* \in \mathfrak{M}}\rho_{we_j^*}(e_i^*)w +\sum\limits_{p \in \Omega}\beta_{e_ip}(e_j)p =\\ &=& \sum\limits_{p \in \Omega}\beta_p(e_i^*)pe_j + \sum\limits_{pe_je_j^* \in \mathfrak{M}}\rho_{pe_je_j^*}(e_i^*)pe_j + \sum\limits_{we_j^* \in \mathfrak{M},\,w_z \ne e_j}\rho_{we_j^*}(e_i^*)w+\\&& +\alpha_v(e_i^*)e_j +(1-\delta_{1,j})\rho_{e_je_j^*}(e_i^*)e_j + \beta_{e_ie_j}(e_j)e_j+\\ &&+ \sum\limits_{p \in \Omega}\beta_{e_ipe_j}(e_j)pe_j + \sum\limits_{p\in\Omega,\,p_z \ne e_j}\beta_{e_ip}(e_j)p=\\ &=& \sum\limits_{p\in \Omega} \Bigl(\beta_{p}(e_i^*) + (1-\delta_{1,j})\rho_{pe_je_j^*}(e_i^*)+\beta_{e_ipe_j}(e_j) \Bigr)pe_j+\\&&+\sum\limits_{p\in\Omega,\,p_z \ne e_j}\Bigl( \rho_{pe_j^*}(e_i^*) + \beta_{e_ip}(e_j)\Bigr)p + \\&& + \Bigl(\alpha_v(e_i^*) +(1-\delta_{1,j})\rho_{e_je_j^*}(e_i^*) + \beta_{e_ie_j}(e_j)\Bigr)e_j,
\end{eqnarray*}
\begin{eqnarray*}
\Bigl.\mathscr{D}(e_i^*)e_j + e_i^* \mathscr{D}(e_j)\Bigr|_{\Omega^*} & = &\alpha_v(e_j)e_i^*+ \sum\limits_{p\in \Omega}\gamma_{e_jp}(e_i^*)p^* + \sum\limits_{p \in \Omega}\gamma_p(e_i)(pe_j)^* + \sum\limits_{e_ih^* \in \mathfrak{M}}\rho_{e_ih^*}(e_j)h^* = \\ &=&\alpha_v(e_j)e_i^*+\gamma_{e_je_i}(e_i^*)e_i^*+\sum\limits_{p \in \Omega}\gamma_{e_jpe_i}(e_i^*)(pe_i)^* + \sum\limits_{p \in \Omega,\, p_z \ne e_i}\gamma_{e_jp}(e_i^*)p^*+\\&&+\sum\limits_{p \in \Omega}\gamma_p(e_i)(pe_i)^* + (1-\delta_{1,i})\rho_{e_ie_i^*}(e_j)e_i^*+\sum\limits_{e_ie_i^*p^*\in\mathfrak{M}}\rho_{e_i(pe_i)^*}(e_j)(pe_i)^*+\\ &&+\sum\limits_{e_ih^*\in\mathfrak{M},\,h_z \ne e_i}\rho_{eh^*}(e_j)h^*=\\&=&\sum\limits_{p\in \Omega}\Bigl(\gamma_{e_jpe_i}(e_i^*) + \gamma_p(e_i) + (1-\delta_{i,1})\rho_{e_i(pe_i)^*}(e_j)\Bigr)(pe_i)^* +\\&&+ \sum\limits_{p \in \Omega,\,p_z \ne e_i}\Bigl( \gamma_{e_jp}(e_i^*)+\rho_{e_ip^*}(e_j)\Bigr)p^*+\\&&+\Bigl(\alpha_v(e_j)+\gamma_{e_je_i}(e_i^*) + (1-\delta_{1,i})\rho_{e_ie_i^*}(e_j) \Bigr)e_i^*,
\end{eqnarray*}
\begin{eqnarray*}
\Bigl.\mathscr{D}(e_i^*)e_j + e_i^* \mathscr{D}(e_j)\Bigr|_\mathfrak{M} & = & \sum\limits_{wh^*\in\mathfrak{M}}\rho_{w(e_jh)^*}(e_i^*)wh^* + \sum\limits_{wh^*\in\mathfrak{M}}\rho_{e_iwh^*}(e_j)wh^* = \\&=&\sum\limits_{wh^* \in \mathfrak{M}}\Bigl(\rho_{w(e_jh)^*}(e_i^*) + \rho_{e_iwh^*}(e_j) \Bigr)wh^*.
\end{eqnarray*}

\smallskip

\par So, for any $1 \le i,j\le \ell$, we have the following equations:
\begin{align*}
&\gamma_{e_j}(e_i^*) + \beta_{e_i}(e_j) = 0,\\
&\beta_p(e_i^*) + (1-\delta_{1,j})\rho_{pe_je_j^*}(e_i^*) + \beta_{e_ipe_j}(e_j) = 0,\qquad p\in \Omega,\\
&\rho_{pe_j^*}(e_i^*) + \beta_{e_ip}(e_j) = 0, \qquad p\in\Omega,\,p_z \ne e_j,\\
&\alpha_v(e_i^*) +(1-\delta_{1,j})\rho_{e_je_j^*}(e_i^*) + \beta_{e_ie_j}(e_j) = 0,\\
&\gamma_{e_jpe_i}(e_i^*) + \gamma_p(e_j) + (1-\delta_{1,i})\rho_{e_i(pe_i)^*}(e_j) = 0, \qquad p\in \Omega,\\
& \gamma_{e_jp}(e_i^*)+\rho_{e_ip^*}(e_j)=0,\qquad p\in\Omega,\,p_z \ne e_i,\\
&\alpha_v(e_j)+\gamma_{e_je_i}(e_i^*) + (1-\delta_{1,i})\rho_{e_ie_i^*}(e_j) = 0,\\
&\rho_{w(e_jh)^*}(e_i^*) + \rho_{e_iwh^*}(e_j)=0,\qquad wh^* \in\mathfrak{M}.
\end{align*}

\smallskip

\par On other hand, let us consider the equation $\sum\limits_{r=1}^\ell\Bigl(\mathscr{D}(e_r)e_r^* + e_r \mathscr{D}(e_r^*)\Bigr) = 0$. For the special edge $e_1$, we have:
\begin{eqnarray*}
\mathscr{D}(e_1)e_1^* &=& \left(\alpha_v(e_1)v+\sum\limits_{p \in \Omega}\Bigl(\beta_p(e_1)p+ \gamma_p(e_1)p^* \Bigr) + \sum\limits_{wh^* \in \mathfrak{M}}\rho_{wh^*}(e_1)wh^*\right)e_1^*=\\&=& \alpha_v(e_1)e_1^*+ \beta_{e_1}(e_1)v - \sum\limits_{k=2}^\ell\beta_{e_1}(e_1)e_ke_k^* + \sum\limits_{p\in\Omega}\beta_{pe_1}(e_1)p - \sum\limits_{p\in \Omega} \sum\limits_{k=2}^\ell\beta_{pe_1}(e_1)pe_ke_k^* +\\&&+ \sum\limits_{p\in \Omega,\,p_z \ne e_1}\beta_p(e_1)pe_1^*+\sum\limits_{p \in \Omega}\gamma_p(e_1)(e_1p)^* + \sum\limits_{wh^* \in \mathfrak{M}}\rho_{wh^*}(e_1)w(e_1h)^*,
\end{eqnarray*}
and
\begin{eqnarray*}
e_1\mathscr{D}(e_1^*) &=&e_1\left(\alpha_v(e_1^*)v+\sum\limits_{p \in \Omega}\Bigl(\beta_p(e_1^*)p+ \gamma_p(e_1^*)p^* \Bigr) + \sum\limits_{wh^* \in \mathfrak{M}}\rho_{wh^*}(e_1^*)wh^*\right) =\\ &=&\alpha_v(e_1^*) e_1 +\sum\limits_{p\in \Omega}\beta_p(e_1^*)e_1p + \gamma_{e_1}(e_1^*)v - \sum\limits_{k=2}^\ell\gamma_{e_1}(e_1^*)e_ke_k^* + \sum\limits_{p\in \Omega}\gamma_{pe_1}(e_1^*)p^* - \\&&-\sum\limits_{p\in \Omega}\sum\limits_{k=2}^\ell\gamma_{pe_1}(e_1^*)e_ke_k^*p^*+\sum\limits_{p\in\Omega,\,p_z \ne e_1}\gamma_p(e_1^*)e_1p^* + \sum\limits_{wh^*\in \mathfrak{M}}\rho_{wh^*}(e_1^*)e_1wh^*.
\end{eqnarray*}

\smallskip

\par Further, we have:
\begin{eqnarray*}
\mathscr{D}(e_r)e_r^* &=& \left(\alpha_v(e_r)v+\sum\limits_{p \in \Omega^*}\Bigl(\beta_p(e_r)p+ \gamma_p(e_r)p^* \Bigr) + \sum\limits_{wh^* \in \mathfrak{M}}\rho_{wh^*}(e_r)wh^*\right)e_r^*=\\&=&\alpha_v(e_r)e_r^*+\sum\limits_{p\in \Omega}\beta_p(e_r)pe_r^* + \sum\limits_{p\in \Omega}\gamma_p(e_r)(e_rp)^* + \sum\limits_{wh^*\in\mathfrak{M}}\rho_{wh^*}(e_r)w(e_rh)^*,
\end{eqnarray*}
and
\begin{eqnarray*}
e_r\mathscr{D}(e_r^*) &=&e_r\left(\alpha_v(e_r^*)v+\sum\limits_{p \in \Omega}\Bigl(\beta_p(e_r^*)p+ \gamma_p(e_r^*)p^* \Bigr) + \sum\limits_{wh^* \in \mathfrak{M}}\rho_{wh^*}(e_r^*)wh^*\right) =\\&=& \alpha_v(e_r^*)e_r+ \sum\limits_{p \in \Omega}\beta_p(e_r^*)e_rp + \sum\limits_{p \in \Omega}\gamma_p(e_r^*)e_rp^* + \sum\limits_{wh^* \in \mathfrak{M}}\rho_{wh^*}(e_r^*)e_rwh^*.
\end{eqnarray*}

\par We get:
\begin{eqnarray*}
\left.\sum\limits_{r=1}^\ell\Bigl(\mathscr{D}(e_r)e_r^* + e_r \mathscr{D}(e_r^*)\Bigr)\right|_{\{v\}} &=& \beta_{e_1}(e_1)v + \gamma_{e_1}(e_1^*)v = 0.
\end{eqnarray*}

\smallskip

\par It follows $\beta_{e_1}(e_1)v + \gamma_{e_1}(e_1^*) = 0$, it is the equation (\ref{e*e1}). Further,

\smallskip

\begin{eqnarray*}
\left.\sum\limits_{r=1}^\ell\Bigl(\mathscr{D}(e_r)e_r^* + e_r \mathscr{D}(e_r^*)\Bigr)\right|_\Omega &=&\sum\limits_{r=1}^\ell\alpha_v(e_r^*)e_r+\sum\limits_{p\in\Omega}\beta_{pe_1}(e_1)p +\sum\limits_{r=1}^\ell\sum\limits_{p\in\Omega}\beta_p(e_r^*)e_rp=\\&=&\sum\limits_{r=1}^\ell\Bigl(\alpha_v(e_r^*) +\beta_{e_re_1}(e_1)\Bigr)e_r + \sum\limits_{r=1}^\ell\sum\limits_{p \in \Omega}\Bigl(\beta_{e_rpe_1}(e_1) + \beta_p(e_r^*) \Bigr)e_rp=\\&=& 0.
\end{eqnarray*}

\smallskip

\par It follows
\[
\alpha_v(e_r^*)+\beta_{e_re_1}(e_1) = 0, \qquad \beta_{e_rpe_1}(e_1) + \beta_p(e_r^*)=0,
\]
these equations have been already found (see (\ref{e*e4}) and (\ref{e*e2}) respectively). Further,

\smallskip

\begin{eqnarray*}
\left.\sum\limits_{r=1}^\ell\Bigl(\mathscr{D}(e_r)e_r^* + e_r \mathscr{D}(e_r^*)\Bigr)\right|_{\Omega^*} &=&\sum\limits_{r=1}^\ell\alpha_v(e_r)e_r^*+ \sum\limits_{p \in \Omega}\gamma_{pe_1}(e_1^*)p^* + \sum\limits_{r=1}^\ell\sum\limits_{p\in\Omega}\gamma_p(e_r)(e_rp)^*=\\&=& \sum\limits_{r=1}^\ell\Bigl(\alpha_v(e_r)+\gamma_{e_re_1}(e_1^*)\Bigr)e_r^* + \sum\limits_{r=1}^\ell\sum\limits_{p\in\Omega}\Bigl( \gamma_{e_rpe_1}(e_1^*) + \gamma_{p}(e_r)\Bigr)(e_rp)^*=\\&=&0.
\end{eqnarray*}

\smallskip

\par Consequently, we get:
\[
\alpha_v(e_r)+\gamma_{e_re_1}(e_1^*)e_r^* =0,\qquad \gamma_{e_rpe_1}(e_1^*) + \gamma_{p}(e_r) = 0,
\]
these are the equations (\ref{e*e7}) and (\ref{e*e5}) respectively.

\smallskip

\par So, we have:

\begin{eqnarray*}
\left.\sum\limits_{r=1}^\ell\Bigl(\mathscr{D}(e_r)e_r^* + e_r \mathscr{D}(e_r^*)\Bigr)\right|_{\mathfrak{M}} &=& - \sum\limits_{k=2}^\ell\beta_{e_1}(e_1)e_ke_k^* - \sum\limits_{p\in\Omega}\sum\limits_{k=2}^\ell\beta_{pe_1}(e_1)pe_ke_k^* + \sum\limits_{p\in\Omega,\,p_z \ne e_1}\beta_p(e_1)pe_1^* + \\ &&+ \sum\limits_{wh^*\in\mathfrak{M}}\rho_{wh^*}(e_1)w(e_1h)^* -\sum\limits_{k=2}^\ell\gamma_{e_1}(e_1^*)e_ke_k^* - \sum\limits_{p\in \Omega}\sum\limits_{k=2}^\ell\gamma_{pe_1}(e_1^*)e_ke_k^*p^* + \\&&+\sum\limits_{p\in \Omega,\,p_z \ne e_1}\gamma_p(e_1^*)e_1p^* + \sum\limits_{wh^*\in\mathfrak{M}}\rho_{wh^*}(e_1^*)e_1wh^* + \sum\limits_{r=2}^\ell\sum\limits_{p\in\Omega}\beta_p(e_r)pe_r^* +\\&&+\sum\limits_{r=2}^\ell\sum\limits_{wh^*\in\mathfrak{M}}\rho_{wh^*}(e_r)w(e_rh)^* + \sum\limits_{r=2}^\ell\sum\limits_{p\in\Omega}\gamma_{p}(e_r^*)e_rp^* + \sum\limits_{r=2}^\ell\sum\limits_{wh^*\in\mathfrak{M}}\rho_{wh^*}(e_r^*)e_rwh^*.
\end{eqnarray*}

\smallskip

\par Let us add up similar terms:
\begin{eqnarray*}
\left.\sum\limits_{r=1}^\ell\Bigl(\mathscr{D}(e_r)e_r^* + e_r \mathscr{D}(e_r^*)\Bigr)\right|_{\mathfrak{M}} &=&-\sum\limits_{k=2}^\ell\Bigl(\beta_{e_1}(e_1) + \gamma_{e_1}(e_1^*)\Bigr)e_ke_k^*+ \sum\limits_{r=1}^\ell\sum\limits_{pe_r^* \in\mathfrak{M}}\beta_p(e_r)pe_r^* +
\sum\limits_{r=1}^\ell\sum\limits_{e_rp^*\in\mathfrak{M}}\gamma_p(e_r^*)e_rp^*-\\ &&-\sum\limits_{k=2}^\ell\sum\limits_{p\in\Omega}\beta_{pe_1}(e_1)pe_ke_k^* -\sum\limits_{k=2}^\ell\sum\limits_{p\in\Omega}\gamma_{pe_1}(e_1^*)e_k(pe_k)^*+ \\&&+
\sum\limits_{r=1}^\ell\sum\limits_{wh^*\in\mathfrak{M}}\rho_{wh^*}(e_r)w(e_rh)^* +
\sum\limits_{r=1}^\ell\sum\limits_{wh^*\in\mathfrak{M}}\rho_{wh^*}(e_r^*)e_rwh^* =\\&=&
\sum\limits_{k=2}^\ell\Bigl(-\beta_{e_1}(e_1) - \gamma_{e_1}(e_1) + \beta_{e_k}(e_k) + \gamma_{e_k}(e_k^*)\Bigr)e_ke_k^* + \\&&+\sum\limits_{r=1}^\ell\sum\limits_{k=1,k \ne r}^\ell\Bigl(\beta_{e_k}(e_r) + \gamma_{e_r}(e_k^*)\Bigr)e_ke_r^* + \sum\limits_{r=1}^\ell\sum\limits_{pe_r^*\in\mathfrak{M},\,p\notin E}\beta_p(e_r)pe_r^* + \\&&+\sum\limits_{r=1}^\ell\sum\limits_{e_rp^*\in\mathfrak{M},\,p\notin E}\gamma_p(e_r^*)e_rp^*-\sum\limits_{k=2}^\ell\sum\limits_{p\in\Omega}\beta_{pe_1}(e_1)pe_ke_k^* -\sum\limits_{k=2}^\ell\sum\limits_{p\in\Omega}\gamma_{pe_1}(e_1^*)e_k(pe_k)^*+ \\&&+
\sum\limits_{k=1}^\ell\sum\limits_{r=1}^\ell\sum\limits_{wh^*\in \mathfrak{M}}\Bigl(\rho_{e_kwh^*}(e_r) + \rho_{w(e_rh)^*}(e_k^*)\Bigr)e_kw(e_rh)^*+\\&&+
\sum\limits_{k=1}^\ell\sum\limits_{r=1}^\ell\sum\limits_{e_kh^*\in\mathfrak{M}}\rho_{e_kh^*}(e_r)e_k(e_rh)^* +
\sum\limits_{k=1}^\ell\sum\limits_{r=1}^\ell\sum\limits_{we_k^*\in\mathfrak{M}}\rho_{we_k^*}(e_r^*)e_rwe_k^* =\\&=&0.
\end{eqnarray*}

\smallskip

\par Let us consider the following sum:
\begin{eqnarray*}
S &=& \sum\limits_{r=1}^\ell\sum\limits_{pe_r^*\in\mathfrak{M},\,p\notin E}\beta_p(e_r)pe_r^* +\sum\limits_{r=1}^\ell\sum\limits_{e_rp^*\in\mathfrak{M},\,p\notin E}\gamma_p(e_r^*)e_rp^*-\\&&-\sum\limits_{k=2}^\ell\sum\limits_{p\in\Omega}\beta_{pe_1}(e_1)pe_ke_k^* -\sum\limits_{k=2}^\ell\sum\limits_{p\in\Omega}\gamma_{pe_1}(e_1^*)e_k(pe_k)^*+\\&&+
\sum\limits_{k=1}^\ell\sum\limits_{r=1}^\ell\sum\limits_{e_kh^*\in\mathfrak{M}}\rho_{e_kh^*}(e_r)e_k(e_rh)^* +
\sum\limits_{k=1}^\ell\sum\limits_{r=1}^\ell\sum\limits_{we_k^*\in\mathfrak{M}}\rho_{we_k^*}(e_r^*)e_rwe_k^*.
\end{eqnarray*}

\smallskip

\par We have:
\begin{eqnarray*}
S & = & \sum\limits_{r=1}^\ell\sum\limits_{pe_r^*\in\mathfrak{M},p\notin E}\beta_p(e_r)pe_r^* -\sum\limits_{k=2}^\ell\sum\limits_{p\in\Omega}\beta_{pe_1}(e_1)pe_ke_k^* +\\&&+\sum\limits_{k=1}^\ell\sum\limits_{r=1}^\ell\sum\limits_{we_k^*\in\mathfrak{M}}\rho_{we_k^*}(e_r^*)e_rwe_k^* + \sum\limits_{r=1}^\ell\sum\limits_{e_rp^*\in\mathfrak{M},\,p\notin E} \gamma_p(e_r^*)e_rp^* - \\&&- \sum\limits_{k=2}^\ell\sum\limits_{p\in\Omega}\gamma_{pe_1}(e_1^*)e_k(pe_k)^* + \sum\limits_{k=1}^\ell\sum\limits_{r=1}^\ell\sum\limits_{e_kh^*\in\mathfrak{M}}\rho_{e_kh^*}(e_r)e_k(e_rh)^*= \\ &=& \sum\limits_{r=1}^\ell\sum\limits_{k=2}^\ell\Bigl(\beta_{e_re_k}(e_k) - \beta_{e_re_1}(e_1) + \rho_{e_ke_k^*}(e_r)\Bigr)e_re_ke_k^* +\sum\limits_{k=1}^\ell\sum\limits_{r=1}^\ell\sum\limits_{p\in\Omega,\,p_z \ne e_k}\Bigl(\beta_{e_rp}(e_k) + \rho_{pe_k^*}(e_r^*)\Bigr)e_rpe_k^*+ \\&&+ \sum\limits_{r=1}^\ell\sum\limits_{k=2}^\ell\sum\limits_{p\in\Omega}\Bigl(\beta_{e_rpe_k}(e_k) - \beta_{e_rpe_1}(e_1) + \rho_{pe_ke_k^*}(e_r^*)\Bigr)e_rpe_ke_k^* +\\&&+
\sum\limits_{r=1}^\ell\sum\limits_{k=2}^\ell\Bigl(\gamma_{e_re_k}(e_k^*) -\gamma_{e_re_1}(e_1^*) + \rho_{e_ke_k^*}(e_r)\Bigr)e_k(e_re_k)^* + \sum\limits_{r=1}^\ell\sum\limits_{k=1}^\ell\sum\limits_{p\in\Omega,\,p_z \ne e_k}\Bigl(\gamma_{e_rp}(e_k^*) + \rho_{e_kp^*}(e_r)\Bigr)e_k(e_rp)^*+\\&&+
\sum\limits_{r=1}^\ell\sum\limits_{k=2}^\ell\sum\limits_{p\in\Omega}\Bigl(\gamma_{e_rpe_k}(e_k^*) - \gamma_{e_rp}(e_1^*) + \rho_{e_k(pe_k)^*}(e_r)\Bigr)e_k(e_rpe_k)^*.
\end{eqnarray*}

\smallskip

\par Therefore:
\begin{eqnarray*}
\left.\sum\limits_{r=1}^\ell\Bigl(\mathscr{D}(e_r)e_r^* + e_r \mathscr{D}(e_r^*)\Bigr)\right|_{\mathfrak{M}} &=& \sum\limits_{k=2}^\ell\Bigl(-\beta_{e_1}(e_1) - \gamma_{e_1}(e_1) + \beta_{e_k}(e_k) + \gamma_{e_k}(e_k^*)\Bigr)e_ke_k^* + \\&&+\sum\limits_{r=1}^\ell\sum\limits_{k=1,k \ne r}^\ell\Bigl(\beta_{e_k}(e_r) + \gamma_{e_r}(e_k^*)\Bigr)e_ke_r^*+\\&& +\sum\limits_{r=1}^\ell\sum\limits_{k=2}^\ell\Bigl(\beta_{e_re_k}(e_k) - \beta_{e_re_1}(e_1) + \rho_{e_ke_k^*}(e_r)\Bigr)e_re_ke_k^* +\\&& +\sum\limits_{k=1}^\ell\sum\limits_{r=1}^\ell\sum\limits_{p\in\Omega,\,p_z \ne e_k}\Bigl(\beta_{e_rp}(e_k) + \rho_{pe_k^*}(e_r^*)\Bigr)e_rpe_k^*+ \\&&+ \sum\limits_{r=1}^\ell\sum\limits_{k=2}^\ell\sum\limits_{p\in\Omega}\Bigl(\beta_{e_rpe_k}(e_k) - \beta_{e_rpe_1}(e_1) + \rho_{pe_ke_k^*}(e_r^*)\Bigr)e_rpe_ke_k^* +\\&&+
\sum\limits_{r=1}^\ell\sum\limits_{k=2}^\ell\Bigl(\gamma_{e_re_k}(e_k^*) -\gamma_{e_re_1}(e_1^*) + \rho_{e_ke_k^*}(e_r)\Bigr)e_k(e_re_k)^* + \\&&+ \sum\limits_{r=1}^\ell\sum\limits_{k=1}^\ell\sum\limits_{p\in\Omega,\,p_z \ne e_k}\Bigl(\gamma_{e_rp}(e_k^*) + \rho_{e_kp^*}(e_r)\Bigr)e_k(e_rp)^*+\\&&+
\sum\limits_{r=1}^\ell\sum\limits_{k=2}^\ell\sum\limits_{p\in\Omega}\Bigl(\gamma_{e_rpe_k}(e_k^*) - \gamma_{e_rp}(e_1^*) + \rho_{e_k(pe_k)^*}(e_r)\Bigr)e_k(e_rpe_k)^*+\\&&+\sum\limits_{k=1}^\ell\sum\limits_{r=1}^\ell\sum\limits_{wh^*\in \mathfrak{M}}\Bigl(\rho_{e_kwh^*}(e_r) + \rho_{w(e_rh)^*}(e_k^*)\Bigr)e_kw(e_rh)^* =\\&=&0.
\end{eqnarray*}

\smallskip

\par (1) The equations
\[
-\beta_{e_1}(e_1) - \gamma_{e_1}(e_1) + \beta_{e_k}(e_k) + \gamma_{e_k}(e_k^*)=0, \quad 2 \le k \le \ell,
\]
are the equations (\ref{e*e1}).

\smallskip

\par (2) The equations
\[
\beta_{e_k}(e_r) + \gamma_{e_r}(e_k^*) = 0, \qquad 1 \le k \ne r \le \ell,
\]
are the equations (\ref{e*e1}).

\smallskip

\par (3) Let us consider the equation
\[
\beta_{e_re_k}(e_k) - \beta_{e_re_1}(e_1) - \rho_{e_ke_k^*}(e_r) = 0, \qquad 1 \le k \ne r \le \ell,
\]
using (\ref{e*e4}) we get
\[
\beta_{e_re_k}(e_k) - \beta_{e_re_1}(e_1) - \rho_{e_ke_k^*}(e_r) = \beta_{e_re_k}(e_k) + \alpha_v(e_r^*) -\rho_{e_ke_k^*}(e_r) = 0,
\]
and we get the equations (\ref{e*e4}).

\smallskip

\par (4) The equations
\[
\beta_{e_rp}(e_k) + \rho_{pe_k^*}(e_r^*) = 0, \qquad 1 \le k,r \le \ell,\, p_z \ne e_k,\, p \in\Omega,
\]
are the equations (\ref{e*e3}).

\smallskip

\par (5) Let us consider the equations
\[
-\beta_{e_rpe_1}(e_1) +\beta_{e_rpe_k}(e_k) +\rho_{pe_ke_k^*}(e_r^*) = 0,\qquad 1\le r \le \ell, \,2 \le k \le \ell,
\]
using (\ref{e*e2}) we have $\rho_{pe_ke_k^*}(e_r^*) = - \beta_{e_rpe_k}(e_k) - \beta_p(e_r^*)$, then we get
\begin{eqnarray*}
0&=&-\beta_{e_rpe_1}(e_1) +\beta_{e_rpe_k}(e_k) +\rho_{pe_ke_k^*}(e_r^*) =\\
&=&-\beta_{e_rpe_1}(e_1) +\beta_{e_rpe_k}(e_k) - \beta_{e_rpe_k}(e_k) - \beta_p(e_r^*)=\\
&=& -\beta_{e_rpe_1}(e_1)- \beta_p(e_r^*),
\end{eqnarray*}
and we get the equations (\ref{e*e2}).

\smallskip

\par (6) Let us consider the equations
\[
\gamma_{e_re_k}(e_k^*) - \gamma_{e_re_1}(e_1^*) + \rho_{e_k(e_re_k)^*}(e_r) = 0, \qquad 1 \le r \le \ell, \, 2 \le k \le \ell,
\]
using (\ref{e*e7}) we get
\[
\gamma_{e_re_k}(e_k^*) - \gamma_{e_re_1}(e_1^*) + \rho_{e_ke_k^*}(e_r) = \gamma_{e_re_k}(e_k^*) + \alpha_{v}(e_r) + \rho_{e_ke_k^*}(e_r) = 0,
\]
and we get the equations (\ref{e*e7}).

\smallskip

\par (7) The equations
\[
\gamma_{e_rp}(e_k^*) + \rho_{e_kp}(e_r) = 0, \qquad 1 \le r,k\le \ell,\, , p\in\Omega,\,p_z \ne e_z,
\]
are the equations (\ref{e*e6}).

\smallskip

\par (8) Let us consider the equations
\[
-\gamma_{e_rpe_1}(e_1^*) +\gamma_{e_rpe_k}(e_k^*) + \rho_{e_k(pe_k)^*}(e_r) = 0, \qquad 1 \le r \le \ell, \, 2 \le k \le \ell,
\]
using (\ref{e*e5}) we get $\rho_{e_k(pe_k)^*}(e_r) = - \gamma_{e_rpe_k}(e_k^*) - \gamma_p(e_k)$ then we have
\begin{eqnarray*}
0 &=& -\gamma_{e_rpe_1}(e_1^*) +\gamma_{e_rpe_k}(e_k^*) + \rho_{e_k(pe_k)^*}(e_r) = \\ &=& -\gamma_{e_rpe_1}(e_1^*) +\gamma_{e_rpe_k}(e_k^*) - \gamma_{e_rpe_k}(e_k^*) - \gamma_p(e_k) = \\
&=& -\gamma_{e_rpe_1}(e_1^*)  - \gamma_p(e_k),
\end{eqnarray*}
and we get the equations (\ref{e*e5}).

\smallskip

\par (9) The equations
\[
\rho_{e_kwh^*}(e_r) + \rho_{w(e_rh)^*}(e_k^*) = 0, \qquad 1 \le k,r \le \ell,\, wh^*\in\mathfrak{M},
\]
are the equations (\ref{e*e8}).

\smallskip

\par So, we have, for any $1\le i,j\le\ell$, the following equations:
\begin{align*}
&\gamma_{e_j}(e_i^*) + \beta_{e_i}(e_j) = 0,\\
&\beta_p(e_i^*) + (1-\delta_{1,j})\rho_{pe_je_j^*}(e_i^*) + \beta_{e_ipe_j}(e_j) = 0,& p\in \Omega,\\
&\rho_{pe_j^*}(e_i^*) + \beta_{e_ip}(e_j) = 0, & p\in\Omega,\,p_z \ne e_j,\\
&\alpha_v(e_i^*) +(1-\delta_{1,j})\rho_{e_je_j^*}(e_i^*) + \beta_{e_ie_j}(e_j) = 0,\\
&\gamma_{e_jpe_i}(e_i^*) + \gamma_p(e_j) + (1-\delta_{1,i})\rho_{e_i(pe_i)^*}(e_j) = 0, & p\in \Omega,\\
& \gamma_{e_jp}(e_i^*)+\rho_{e_ip^*}(e_j)=0,& p\in\Omega,\,p_z \ne e_i,\\
&\alpha_v(e_j)+\gamma_{e_je_i}(e_i^*) + (1-\delta_{1,i})\rho_{e_ie_i^*}(e_j) = 0,\\
&\rho_{w(e_jh)^*}(e_i^*) + \rho_{e_iwh^*}(e_j)=0,& wh^* \in\mathfrak{M},
\end{align*}
as claimed.
\end{proof}

\smallskip

\begin{example}[Derivations of Laurent polynomial ring]
It is well known that the Leavitt path algebra $W(1)$ is the Laurent polynomial ring, i.e.,
\begin{align*}
&L\Bigl(\xymatrix{\bullet^v \ar@(ul, dl)_e}\Bigr) \cong R[t,t^{-1}],\\
&u\longleftrightarrow 1, \quad e \longleftrightarrow t, \quad e^* \longleftrightarrow t^{-1}.
\end{align*}

\smallskip

\par In this case, using the Theorem \ref{genth} we can describe any derivations $\mathscr{D}$ as follows:
\[
\mathscr{D}(x) = \begin{cases}0, \mbox{ if $x =v$},\\
\alpha_v(x)v + \sum\limits_{i=1}^\infty\Bigl(\beta_{e^i}(x)e^i + \gamma_{e^i}(x)(e^i)^*\Bigr),\mbox{ if $x \in \{e\}\cup \{e^*\}$,}
\end{cases}
\]
where $e^i: = \underbrace{e \cdots e}_i$, and almost all scalars $\alpha(x),\beta(x), \gamma(x)\in R$ are zero, and they satisfy the following equations
\begin{align*}
&\gamma_{e}(e^*) + \beta_{e}(e) = 0,\\
&\beta_{e^i}(e^*) + \beta_{e^{i+2}}(e) = 0,& i \ge 1\\
&\alpha_v(e^*) + \beta_{e^2}(e) = 0,\\
&\gamma_{e^{i+2}}(e^*) + \gamma_{e^i}(e) = 0, & i\ge 1\\
&\alpha_v(e)+\gamma_{e^2}(e^*) =0.
\end{align*}

\smallskip

\par 1) Let $\alpha_v(e) =1$, $\beta_{e^i}(e) =\gamma_{e^i}(e)= 0$ for any $i \ge 1$, then $\gamma_{e^2}(e^*) = -1$ and another scalars are zero. We get the well-known formulas
\[
\mathscr{D}(e)= v \longleftrightarrow \dfrac{\partial t}{\partial t} = 1, \qquad \mathscr{D}(e^*) = - (ee)^* \longleftrightarrow \dfrac{\partial t^{-1}}{\partial t} = -(t^2)^{-1}.
\]

\smallskip

\par 2) Let $\alpha_v(e^*) =1$ and $\beta_{e^i}(e^*) = \gamma_{e^i}(e^*) = 0$ for any $i \ge 1$, then $\beta_{e^2}(e) = 0$ and another scalars are zero. We again get the well-known formulas,
\[
\mathscr{D}'(e) = -ee \longleftrightarrow \dfrac{\partial t}{\partial t^{-1}}= -t^2, \qquad \mathscr{D}'(e^*) = v\longleftrightarrow \dfrac{\partial t^{-1}}{\partial t^{-1}} = 1.
\]
\end{example}

\smallskip

\begin{example}
Let us consider the Leavitt path algebra $W(2)$, and let us consider some of its derivations. Using Theorem \ref{genth}, we can put
\[
\mathscr{D}(e_1) = \alpha_v(e_1)v, \qquad \mathscr{D}(e_2) = \alpha_v(e_2)v,
\]
then we get
\[
\mathscr{D}(e_1^*) = -\alpha_v(e_1)(e_1e_1)^* - \alpha_v(e_2)(e_2e_1)^*, \qquad \mathscr{D}(e_2^*) = -\alpha_v(e_1)(e_1e_2)^* - \alpha_v(e_2)(e_2e_2)^*.
\]
\end{example}

\section{The inner and the outer derivations of $W(\ell)$}
In this section we describe all inner derivations of the Leavitt path algebra $W(\ell)$. We use the standard notations, that is $\mathrm{ad}_x(y): = [x,y] = xy - yx$. From Theorem \ref{GSBLev} we have for any $v\in V$, $p \in \Omega$ and $wh^* \in \mathfrak{M}$ the following basic elements of $\mathrm{Im}(d_0)$,
\begin{eqnarray*}
\mathrm{ad}_v(-) &= &\begin{cases}\mathrm{ad}_v(v) = vv-vv = 0\\
 \mathrm{ad}_v(e) = ve - ev = 0\\ \mathrm{ad}_v(e^*) = ve^* - e^*v = 0
 \end{cases}\\
\mathrm{ad}_p(-) &= &\begin{cases}
 \mathrm{ad}_p(v) = pv - vp=0\\
 \mathrm{ad}_p(e) = pe - ep\\
 \mathrm{ad}_p(e^*) = pe^* - \delta_{p_0,e}(p/p_0)
 \end{cases} \\
\mathrm{ad}_{p^*}(-) &= &\begin{cases}
 \mathrm{ad}_{p^*}(v) = p^*v - vp^* = 0\\
 \mathrm{ad}_{p^*}(e) = \delta_{p_0,e}(p/p_0)^* - ep^*\\
 \mathrm{ad}_{p^*}(e^*) = p^*e^* - e^*p^*
 \end{cases}\\
\mathrm{ad}_{wh^*}(-) &= &\begin{cases}
\mathrm{ad}_{wh^*}(v) = wh^*v - vwh^* = 0\\
\mathrm{ad}_{wh^*}(e) = \delta_{h_0,e}w(h/h_0)^* - (ew)h^*\\
\mathrm{ad}_{wh^*}(e^*) = wh^*e^* - \delta_{w_0, e}(w/w_0)h^*.
\end{cases}
\end{eqnarray*}

\smallskip

\par It follows that any $A\in\mathrm{InnDer}(W(\ell))$, can be presented as follows:
\[
A = \sum\limits_{p \in \Omega}\Bigl(\nu_p\mathrm{ad}_p + \nu_p'\mathrm{ad}_{p^*}\Bigr) + \sum\limits_{wh^* \in \mathfrak{M}}\nu_{wh^*}''\mathrm{ad}_{wh^*}.
\]

\smallskip

\begin{theorem}\label{innerder}
Any inner derivation $A$ of the Leavitt path algebra $W(\ell)$ can be described as follows
\[
A(x) = \begin{cases} 0, \mbox{ if } x =v,\\ \alpha_v(x)v+ \sum\limits_{p \in \Omega}\Bigl(\beta_p(x)p+\gamma_p(x)p^*\Bigr) + \sum\limits_{wh^*\in\mathfrak{M}}\rho_{wh^*}(x)wh^*, \mbox{ if $x \in E \cup E^*$,}\end{cases}
\]
where almost all scalers $\beta(x), \gamma(x), \rho(x)$ are zero and they satisfy the following equations,
\begin{align*}
&\beta_{e_1pe_1}(e_1) = \beta_{e_1pe_1}(e_1^*)=0, & p\in \Omega,\\
&\gamma_{e_1pe_1}(e_1^*) = \gamma_{e_1pe_1}(e_1) =0,&p\in \Omega,\\
&\gamma_{e_j}(e_i^*) + \beta_{e_i}(e_j) = 0,\\
&\beta_p(e_i^*) + (1-\delta_{1,j})\rho_{pe_je_j^*}(e_i^*) + \beta_{e_ipe_j}(e_j) = 0,& p\in \Omega,\\
&\rho_{pe_j^*}(e_i^*) + \beta_{e_ip}(e_j) = 0, & p\in\Omega,\,p_z \ne e_j, \\
&\alpha_v(e_i^*)+(1-\delta_{1,j})\rho_{e_je_j^*}(e_i^*) + \beta_{e_ie_j}(e_j) = 0,\\
&\gamma_{e_jpe_i}(e_i^*) + \gamma_p(e_j) + (1-\delta_{1,i})\rho_{e_i(pe_i)^*}(e_j) = 0, & p\in \Omega,\\
& \gamma_{e_jp}(e_i^*)+\rho_{e_ip^*}(e_j)=0,& p\in\Omega,\,p_z \ne e_i,\\
&\alpha_v(e_j)+\gamma_{e_je_i}(e_i^*) + (1-\delta_{1,i})\rho_{e_ie_i^*}(e_j) = 0,\\
&\rho_{w(e_jh)^*}(e_i^*) + \rho_{e_iwh^*}(e_j)=0,& wh^* \in\mathfrak{M},
\end{align*}
for any $1\le i, j \le \ell$.
\end{theorem}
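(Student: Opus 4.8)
The plan is to read the coefficients of an inner derivation straight off the commutator table compiled just before the statement, and then to separate what is automatic from what is genuinely new. Since every inner derivation $A$ is in particular a derivation, the preceding Proposition gives $A(v)=0$ and Theorem~\ref{genth} tells us that $A$ already has the displayed general shape with coefficients $\alpha_v,\beta,\gamma,\rho$ satisfying \eqref{e*e1}--\eqref{e*e8}. Hence the entire burden of the theorem is to verify the four extra families, namely that the coefficients of the monomials $e_1pe_1$ and $(e_1pe_1)^*$ in $A(e_1)$ and $A(e_1^*)$ behave as claimed. I would first discard the central summand: writing a general $\lambda\in W(\ell)$ in the basis of Theorem~\ref{GSBW}, its vertex part contributes $\mathrm{ad}_v=0$, so every inner derivation has the form $A=\sum_{p\in\Omega}\bigl(\nu_p\,\mathrm{ad}_p+\nu_p'\,\mathrm{ad}_{p^*}\bigr)+\sum_{wh^*\in\mathfrak{M}}\nu_{wh^*}''\,\mathrm{ad}_{wh^*}$.

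Next I would substitute the tabulated commutator formulas for $\mathrm{ad}_p$, $\mathrm{ad}_{p^*}$ and $\mathrm{ad}_{wh^*}$ evaluated at $e_i$ and $e_i^*$, and collect like basis monomials, so that each of $\alpha_v(x),\beta_q(x),\gamma_q(x),\rho_{wh^*}(x)$ becomes an explicit finite $R$-linear combination of the scalars $\nu,\nu',\nu''$. The forward and backward halves $pe-ep$, $pe^*-\delta_{p_0,e}(p/p_0)$, $\delta_{p_0,e}(p/p_0)^*-ep^*$, and their mixed analogues, simply transport a coefficient from $p$ to $pe$ or to $ep$ (and dually on the starred side), so away from the special edge the matching of coefficients is purely combinatorial. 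The one delicate point is that whenever a product forces the pattern $e_1e_1^*$, the defining relation \eqref{GSBl} must be applied, rewriting $e_1e_1^*=v-\sum_{k\ge 2}e_ke_k^*$ and redistributing the offending word back into normal form; keeping this redistribution honest is what generates the nontrivial $(1-\delta_{1,j})$-terms already visible in \eqref{e*e2}, \eqref{e*e4}, \eqref{e*e5} and \eqref{e*e7}.

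The crux is exactly this special-edge bookkeeping for $e_1pe_1$ and $(e_1pe_1)^*$. Such a word both begins and ends in the special edge $e_1$, so the only companion terms that could feed its coefficient---the mixed words $(e_1pe_1)e_1^*$ arising from the $\mathrm{ad}_{wh^*}$ and $\mathrm{ad}_{p^*}$ pieces---are \emph{not} normal forms and are eliminated by \eqref{GSBl}. I would therefore isolate, among all $\mathrm{ad}$-contributions to $A(e_1)$ and $A(e_1^*)$, precisely those that survive reduction onto $e_1pe_1$ or $(e_1pe_1)^*$, determine the surviving combination of $\nu$'s, and read off the four families. Carrying out this reduction cleanly---tracking which starred and mixed contributions collapse under \eqref{GSBl} and which do not---is the main obstacle, since it is here that the role of $e_1$ being special is decisive; once it is settled, the remaining identities coincide term-by-term with the equations \eqref{e*e1}--\eqref{e*e8} already furnished by Theorem~\ref{genth}, and the description of $A$ closes. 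For the converse inclusion one would invert the resulting linear system to recover an explicit $\lambda$ realizing any coefficient family obeying all the listed equations, which is a routine finiteness-and-linear-algebra step.
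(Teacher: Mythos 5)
Your outline reproduces the paper's strategy---expand $A=\sum_{p\in\Omega}\bigl(\nu_p\,\mathrm{ad}_p+\nu_p'\,\mathrm{ad}_{p^*}\bigr)+\sum_{wh^*\in\mathfrak{M}}\nu_{wh^*}''\,\mathrm{ad}_{wh^*}$, reduce to normal form via $e_1e_1^*=v-\sum_{k\ge2}e_ke_k^*$, and compare coefficients---and you are right that everything except the four vanishing families $\beta_{e_1pe_1}(e_1)=\beta_{e_1pe_1}(e_1^*)=\gamma_{e_1pe_1}(e_1)=\gamma_{e_1pe_1}(e_1^*)=0$ is already supplied by Theorem~\ref{genth}. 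But those four families are the entire content of the theorem, and you do not establish them: you name the ``special-edge bookkeeping'' as the main obstacle and then stop. The paper's proof consists precisely of carrying out that bookkeeping, term by term, for $A(e_1)$ and $A(e_1^*)$ (and of checking that $A(e_r)$, $A(e_r^*)$ for $r\ge2$ yield nothing new), so what you have written is a plan for the proof rather than the proof.

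More seriously, the one concrete claim you do make about the crucial step is aimed at the wrong terms. You argue that the coefficient of $e_1pe_1$ in $A(e_1)$ could only be fed by non-normal mixed words such as $(e_1pe_1)e_1^*$, which are absent from $\mathfrak{M}$ or eliminated by \eqref{GSBl}. But the contributions that actually land on $e_1pe_1$ come from the plain path commutators you set aside as ``purely combinatorial'': $\mathrm{ad}_q(e_1)=qe_1-e_1q$ contributes $\nu_{e_1p}$ (from $q=e_1p$) and $-\nu_{pe_1}$ (from $q=pe_1$), so the coefficient in question is $\nu_{e_1p}-\nu_{pe_1}$, with no reduction by \eqref{GSBl} involved at all. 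Whether this vanishes is exactly the point at issue, and it is not a formality: already $\mathrm{ad}_{e_1e_2}(e_1)=e_1e_2e_1-e_1e_1e_2$ carries the normal-form basis word $e_1e_2e_1$ with coefficient $1$, so the vanishing cannot be obtained by the cancellation mechanism you describe (and this is precisely the spot where the paper's own assertion that ``there are no terms of form $e_1pe_1$'' must be scrutinised). Finally, the closing ``converse inclusion'' you propose is not part of the statement: the theorem asserts only necessary conditions on the coefficients of an inner derivation, and the paper proves nothing in the converse direction.
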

\begin{proof} We have to proof only first and second equations, because another equations have been obtained in Theorem \ref{genth}.

\smallskip

\par 1) For the special edge $e_1$ we have:
\begin{eqnarray*}
A(e_1) &=& \sum\limits_{p\in\Omega}\nu_p\Bigl(pe_1-e_1p\Bigr) + \sum\limits_{p\in\Omega}\nu'_p\Bigl(\delta_{p_0,e_1}(p/p_0)^* - e_1p^*\Bigr)+\\&&+
\sum\limits_{wh^*\in\mathfrak{M}}\nu''_{wh^*}\Bigl(\delta_{h_0,e_1}w(h/h_0)^* - e_1wh^*\Bigr)=\\
&=& \sum\limits_{p\in\Omega}\nu_p\Bigl(pe_1-e_1p\Bigr) + \nu'_{e_1}\Bigl(v-e_1e_1^*\Bigr) - \sum\limits_{k=2}^\ell\nu'_{e_k}e_1e_k^* + \nu_{e_1e_1}\Bigl(e_1^* - e_1e_1^*e_1^*\Bigr)-\\&&-
\sum\limits_{k=2}^\ell\nu'_{e_ke_1}e_1e_1^*e_k^* + \sum\limits_{k=2}^\ell\nu'_{e_1e_k}\Bigl(e_k^* - e_1(e_1e_k)^*\Bigr) -\sum\limits_{k=2}^\ell\sum\limits_{r=2}^\ell\nu'_{e_ke_r}e_1(e_ke_r)^*+\\&&+
\sum\limits_{p\in\Omega}\nu'_{e_1pe_1}\Bigl((pe_1)^* - e_1e_1^*(e_1p)^*\Bigr)-\sum\limits_{k=2}^\ell\sum\limits_{p\in\Omega}\nu'_{e_kpe_1}e_1e_1^*(e_kp)^*+\\&&+
\sum\limits_{k=2}^\ell\sum\limits_{p\in\Omega}\nu'_{e_1pe_k}\Bigl((pe_k)^* - e_1(e_1pe_k)^*\Bigr)-
\sum\limits_{k=2}^\ell\sum\limits_{r=2}^\ell\sum\limits_{p\in\Omega}\nu'_{e_kpe_r}e_1(e_kpe_r)^*+\\&&+
\sum\limits_{we_1^*\in\mathfrak{M}}\nu''_{we_1^*}\Bigl(w - e_1we_1^*\Bigr) - \sum\limits_{k=2}^\ell\sum\limits_{w\in\Omega}\nu''_{we_k^*}e_1we_k^* +\\
&&+\sum\limits_{wh^*\in\mathfrak{M}}\nu''_{w(e_1h)^*}\Bigl(wh^* - e_1w(e_1h)^*\Bigr)
-\sum\limits_{k=2}^\ell\sum\limits_{wh^*\in\mathfrak{M}}\nu''_{w(e_kh)^*}e_1w(e_kh)^*.
\end{eqnarray*}

\smallskip

\par Using the equation $e_1e_1^* = v - \sum\limits_{k=2}^\ell e_ke_k^*$ we get:
\begin{eqnarray*}
A(e_1) &=&\sum\limits_{p\in\Omega}\nu_p\Bigl(pe_1-e_1p\Bigr) +\sum\limits_{k=2}^\ell\nu'_{e_1}e_ke_k^*- \sum\limits_{k=2}^\ell\nu'_{e_k}e_1e_k^* + \sum\limits_{k=2}^\ell\nu_{e_1e_1}e_ke_k^*e_1^*-\\&&-
\sum\limits_{k=2}^\ell\nu'_{e_ke_1}e_k^*+\sum\limits_{r=2}^\ell\sum\limits_{k=2}^\ell\nu'_{e_ke_1}e_r(e_ke_r)^* + \sum\limits_{k=2}^\ell\nu'_{e_1e_k}\Bigl(e_k^* - e_1(e_1e_k)^*\Bigr) -\sum\limits_{k=2}^\ell\sum\limits_{r=2}^\ell\nu'_{e_ke_r}e_1(e_ke_r)^*+\\&&+
\sum\limits_{p\in\Omega}\nu'_{e_1pe_1}\Bigl((pe_1)^* - (e_1p)^*\Bigr)+ \sum\limits_{k=2}^\ell\sum\limits_{p\in\Omega}\nu'_{e_1pe_1}e_k(e_1pe_k)^*  -\sum\limits_{k=2}^\ell\sum\limits_{p\in\Omega}\nu'_{e_kpe_1}(e_kp)^*+\\&&+
\sum\limits_{k=2}^\ell\sum\limits_{r=2}^\ell\sum\limits_{p\in\Omega}\nu'_{e_kpe_1}e_r(e_kpe_r)^*+
\sum\limits_{k=2}^\ell\sum\limits_{p\in\Omega}\nu'_{e_1pe_k}\Bigl((pe_k)^* - e_1(e_1pe_k)^*\Bigr)-\\&&-
\sum\limits_{k=2}^\ell\sum\limits_{r=2}^\ell\sum\limits_{p\in\Omega}\nu'_{e_kpe_r}e_1(e_kpe_r)^*+
\sum\limits_{we_1^*\in\mathfrak{M}}\nu''_{we_1^*}\Bigl(w - e_1we_1^*\Bigr) - \sum\limits_{k=2}^\ell\sum\limits_{w\in\Omega}\nu''_{we_k^*}e_1we_k^* +\\
&&+\sum\limits_{wh^*\in\mathfrak{M}}\nu''_{w(e_1h)^*}\Bigl(wh^* - e_1w(e_1h)^*\Bigr)
-\sum\limits_{k=2}^\ell\sum\limits_{wh^*\in\mathfrak{M}}\nu''_{w(e_kh)^*}e_1w(e_kh)^*.
\end{eqnarray*}

\smallskip

\par We have:

\smallskip

\begin{eqnarray*}
\Bigl.A(e_1)\Bigr|_{\{v\}} & = & 0,\\
\Bigl.A(e_1)\Bigr|_\Omega &=& \sum\limits_{p\in\Omega}\nu_{p}\Bigl(pe_1-e_1p\Bigr) + \sum\limits_{we_1^*\in\mathfrak{M}}\nu''_{we_1^*}w,
\end{eqnarray*}
it follows that there are no terms of form $e_1pe_1$, i.e.,
\[
\beta_{e_1pe_1}(e_1) = 0.
\]
\smallskip

\par Further, we have:
\begin{eqnarray*}
\Bigl.A(e_1)\Bigr|_{\Omega^*} &=& \sum\limits_{p\in\Omega}\nu'_{e_1pe_1}\Bigl((pe_1)^* - (e_1p)^*\Bigr) + \sum\limits_{k=2}^\ell\nu'_{e_1e_k}e_k^* - \sum\limits_{k=2}^\ell\nu'_{e_ke_1}e_k^*+\\
&&+\sum\limits_{k=2}^\ell\sum\limits_{p\in\Omega}\nu'_{e_1pe_k}(pe_k)^* - \sum\limits_{k=2}^\ell\sum\limits_{p\in\Omega}\nu'_{e_kpe_1}(e_kp)^* =\\&=&\sum\limits_{k=2}^\ell\Bigl(\nu'_{e_1e_k}- \nu'_{e_ke_1}\Bigr)e_k^* +\sum\limits_{k=2}^\ell\Bigl(\nu'_{e_1e_1e_k} - \nu'_{e_1e_ke_1}\Bigr)(e_1e_k)^* +\sum\limits_{k=2}^\ell\Bigl(\nu'_{e_1e_ke_1}-\nu'_{e_ke_1e_1}\Bigr)(e_ke_1)^* + \\&&
+\sum\limits_{r=2}^\ell\sum\limits_{k=2}^\ell\Bigl(\nu'_{e_1e_re_k} - \nu'_{e_re_ke_1}\Bigr)(e_re_k)^* + \sum\limits_{p\in\Omega}\Bigl(\nu'_{e_1e_1pe_1} - \nu'_{e_1pe_1e_1}\Bigr)(e_1pe_1)^*+
\\&&
+\sum\limits_{k=2}^\ell\sum\limits_{p\in\Omega}\Bigl(\nu'_{e_1e_1pe_k}-\nu'_{e_1pe_ke_1}\Bigr)(e_1pe_k)^*+
\sum\limits_{k=2}^\ell\sum\limits_{p\in\Omega}\Bigl(\nu'_{e_1e_kpe_1}-\nu'_{e_kpe_1e_1}\Bigr)(e_kpe_1)^*+
\\&& + \sum\limits_{r=2}^\ell\sum\limits_{k=2}^\ell\sum\limits_{p\in\Omega}\Bigl(\nu'_{e_1e_rpe_k} - \nu'_{e_rpe_ke_1}\Bigr)(e_rpe_k)^*=\\&=&
\sum\limits_{k=2}^\ell\Bigl(\nu'_{e_1e_k}- \nu'_{e_ke_1}\Bigr)e_k^* +\sum\limits_{k=1}^\ell\sum\limits_{r=1}^\ell\Bigl(\nu'_{e_1e_re_k} - \nu'_{e_re_ke_1}\Bigr)(e_re_k)^*
+\\&&+
\sum\limits_{k=1}^\ell\sum\limits_{r=1}^\ell\sum\limits_{p\in\Omega}\Bigl(\nu'_{e_1e_rpe_k} - \nu'_{e_rpe_ke_1}\Bigr)(e_rpe_k)^*,
\end{eqnarray*}
it follows that there are no terms of form $e_1^*p^*e_1^*$, i.e.,
\[
\gamma_{e_1pe_1}(e_1)  = 0.
\]
\smallskip

\par Further, we have:
\begin{eqnarray*}
\Bigl.A(e_1)\Bigr|_\mathfrak{M} &=& \sum\limits_{k=2}^\ell\nu'_{e_1}e_ke_k^* -\sum\limits_{k=2}^\ell\nu'_{e_k}e_1e_k^* + \sum\limits_{r=2}^\ell\nu'_{e_1e_1}e_r(e_1e_r)^*+\\&&+
\sum\limits_{r=2}^\ell\sum\limits_{k=2}^\ell\nu'_{e_ke_1}e_r(e_ke_r)^* - \sum\limits_{r=2}^\ell\nu'_{e_1e_r}e_1(e_1e_r)^* -\\&&-
\sum\limits_{r=2}^\ell\sum\limits_{k=2}^\ell\nu'_{e_ke_r}e_1(e_ke_r)^* +\sum\limits_{r=2}^\ell\sum\limits_{p\in\Omega}\nu'_{e_1pe_1}e_r(e_1pe_r)^* +\\
&&+\sum\limits_{r=2}^\ell\sum\limits_{k=2}^\ell\sum\limits_{p\in\Omega}\nu'_{e_kpe_1}e_r(e_kpe_r)^* -\sum\limits_{r=2}^\ell\sum\limits_{p\in\Omega}\nu'_{e_1pe_r}e_1(e_1pe_r)^*-\\&&-
\sum\limits_{r=2}^\ell\sum\limits_{k=2}^\ell\sum\limits_{p\in\Omega}\nu'_{e_kpe_r}e_1(e_kpe_r)^* - \sum\limits_{we_1^*\in\mathfrak{M}}\nu''_{we_1^*}e_1we_1^*-\\&&
-\sum\limits_{k=2}^\ell\sum\limits_{w\in\Omega}\nu''_{we_k^*}e_1we_k^*+
\sum\limits_{wh^*\in\mathfrak{M}}\nu''_{w(e_1h)^*}\Bigl(wh^*-e_1w(e_1h)^*\Bigr)-\\&&-
\sum\limits_{k=2}^\ell\sum\limits_{wh^*\in\mathfrak{M}}\nu''_{w(e_kh)^*}e_1w(e_kh)^*.
\end{eqnarray*}

\smallskip

\par Let us add up similar terms:
\begin{eqnarray*}
\Bigl.A(e_1)\Bigr|_\mathfrak{M} &=& \sum\limits_{k=2}^\ell\Bigl(\nu'_{e_1} + \nu''_{e_k(e_1e_k)^*}\Bigr)e_ke_k^* + \sum\limits_{k=2}^\ell\Bigl(\nu''_{e_1(e_1e_k)^*} - \nu'_{e_k}\Bigr)e_1e_k^*+\\&&+
\sum\limits_{k=1}^\ell\sum\limits_{r=2}^\ell\Bigl(\nu'_{e_ke_1}+\nu''_{e_r(e_1e_ke_r)^*}\Bigr)e_r(e_ke_r)^*+
\sum\limits_{k=1}^\ell\sum\limits_{r=2}^\ell\Bigl(\nu''_{e_1(e_1e_ke_r)^*}-\nu'_{e_ke_r}\Bigr)e_1(e_ke_r)^*+\\&&+
\sum\limits_{k=1}^\ell\sum\limits_{r=2}^\ell\sum\limits_{p\in\Omega}\Bigl(\nu'_{e_kpe_1}+
\nu''_{e_r(e_1e_kpe_r)^*}\Bigr)e_r(e_kpe_r)^*+
\sum\limits_{k=1}^\ell\sum\limits_{r=2}^\ell\sum\limits_{p\in\Omega}\Bigl(\nu''_{e_1(e_1e_kpe_r)^*}-
\nu'_{e_kpe_r}\Bigr)e_1(e_kpe_r)^*+\\&&+
\sum\limits_{k=1}^\ell\sum\limits_{we_k^*\in\mathfrak{M}}\Bigl(\nu''_{e_1w(e_1e_k)^*} - \nu''_{we_k^*}\Bigr)e_1we_k^* +
\sum\limits_{k=1}^\ell\sum\limits_{wh^*\in\mathfrak{M}}\Bigl(\nu''_{e_1w(e_1e_kh)^*} - \nu''_{w(e_kh)^*}\Bigr)e_1w(e_kh)^*+\\&&+\sum\limits_{wh^*\in\mathfrak{M},w_0 \ne e_1,\,w \notin E}\nu''_{w(e_1h)^*}wh^*,
\end{eqnarray*}
we see there are no zero terms.

\smallskip

\par 2) Let us consider the edges $e_r\in E$, $2 \le r \le \ell$. For fixed $e_r$ we have:
\begin{eqnarray*}
A(e_r) & = & \sum\limits_{p\in\Omega}\nu_p\Bigl( pe_r- e_rp\Bigr) + \sum\limits_{p\in\Omega}\nu'_{p}\Bigl(\delta_{p_0,e_r}(p/p_0)^* - e_rp^*\Bigr) + \\&&+ \sum\limits_{wh^*\in\mathfrak{M}}\nu''_{wh^*}\Bigl(\delta_{h_0,e_r}w(h/h_0)^* - e_rwh^*\Bigr)=\\&=&\sum\limits_{p\in\Omega}\nu_p\Bigl(pe_r - e_rp\Bigr) + \nu'_{e_r}\Bigl(v -e_re_r^*\Bigr) - \sum\limits_{k=1,k\ne r}^\ell\nu'_{e_k}e_re_k^*+\\
&&+\sum\limits_{p\in\Omega}\nu'_{e_rp}\Bigl(p^* - e_r(e_rp)^*\Bigr)-\sum\limits_{k=1,k\ne r}^\ell\sum\limits_{p\in\Omega}\nu'_{e_kp}e_r(e_kp)^*+\\&&+\sum\limits_{w\in\Omega}\nu''_{we_r^*}\Bigl(w-e_rwe_r^*\Bigr) + \sum\limits_{wh^*\in\mathfrak{M}}\nu''_{w(e_rh)^*}\Bigl(wh^* - e_rw(e_rh)^*\Bigr)-\\&&-
\sum\limits_{k=1,k\ne r}^\ell\sum\limits_{we_k^*\in\mathfrak{M}}\nu''_{we_k^*}e_rwe_k^* -\sum\limits_{k=1,k\ne r}^\ell\sum\limits_{wh^*\in\mathfrak{M}}\nu''_{w(e_kw)^*}e_rw(e_kh)^*.
\end{eqnarray*}

\smallskip

\par Let us add up similar terms:
\begin{eqnarray*}
\Bigl.A(e_r)\Bigr|_{\{v\}} &=& \nu'_{e_r},\\
\Bigl.A(e_r)\Bigr|_\Omega &=& \sum\limits_{p\in\Omega} \nu_p\Bigl(pe_r - e_rp\Bigr) + \sum\limits_{p\in\Omega}\nu''_{pe_r^*}p,\\
\Bigl.A(e_r)\Bigr|_{\Omega^*} &=& \sum\limits_{p\in\Omega}\nu'_{e_rp}p^*,\\
\Bigl.A(e_r)\Bigr|_\mathfrak{M} &=& -\sum\limits_{k=1}^\ell\nu'_{e_k}e_re_k^* -\sum\limits_{k=1}^\ell\sum\limits_{p\in\Omega}\nu'_{e_kp}e_r(e_kp)^*-
\sum\limits_{k=1}^\ell\sum\limits_{we_k^*\in\mathfrak{M}}\nu''_{we_k^*}e_rwe_k^*-\\&&-
\sum\limits_{k=1}^\ell\sum\limits_{wh^*\in\mathfrak{M}}\nu''_{w(e_kh)^*}e_rw(e_kh)^* + \sum\limits_{wh^*\in\mathfrak{M}}\nu''_{w(e_rh)^*}wh^*=\\&=&
\sum\limits_{k=1}^\ell\sum\limits_{wh^*\in\mathfrak{M}}\Bigl(\nu''_{e_r(e_re_k)^*} - \nu'_{e_k}\Bigr)e_re_k^* + \sum\limits_{k=1}^\ell\sum\limits_{p\in\Omega}\Bigl(\nu''_{e_r(e_re_kp)^*} - \nu'_{e_kp}\Bigr)e_r(e_kp)^*+\\&&+
\sum\limits_{k=1}^\ell\sum\limits_{we_k^*\in\mathfrak{M}}\Bigl(\nu''_{e_rw(e_re_k)^*} - \nu''_{we_k^*}\Bigr)e_rwe_k^* + \sum\limits_{k=1}^\ell\sum\limits_{wh^*\in\mathfrak{M}}\Bigl(\nu''_{e_rw(e_re_kh)^*} - \nu''_{w(e_kh)^*}\Bigr)e_rw(e_kh)^*+\\&&+\sum\limits_{wh^*\in\mathfrak{M},w_0 \ne e_k}\nu''_{w(e_rh)^*}wh^*,
\end{eqnarray*}
we see there are no zero terms.

\smallskip

\par 4) For the $e_1^*$ we have:
\begin{eqnarray*}
A(e_1^*) &=& \sum\limits_{p\in\Omega}\nu_p\Bigl(pe_1^* - \delta_{p_0,e_1}(p/p_0)\Bigr) + \sum\limits_{p\in\Omega^*}\nu'_p\Bigl((e_1p)^* - (pe_1)^*\Bigr) + \\&&+ \sum\limits_{wh^*\in\mathfrak{M}}\nu''_{wh^*}\Bigl(w(e_1h)^* - \delta_{w_0,e_1}(w/w_0)h^*\Bigr) =\\&=& \nu_{e_1}\Bigl(e_1e_1^* - v\Bigr) + \sum\limits_{k=2}^\ell\nu_{e_k}e_ke_1^* + \nu_{e_1e_1}\Bigl(e_1e_1e_1^* - e_1\Bigr)+\\&&+ \sum\limits_{k=2}^\ell\nu_{e_ke_1}e_ke_1e_1^* +\sum\limits_{k=2}^\ell\nu_{e_1e_k}\Bigl(e_1e_ke_1^* - e_k\Bigr) + \sum\limits_{k=2}^\ell\sum\limits_{r=2}^\ell\nu_{e_ke_r}e_ke_re_1^*+ \\
&&+\sum\limits_{p\in\Omega}\nu_{e_1pe_1}\Bigl(e_1pe_1e_1^* - pe_1\Bigr) + \sum\limits_{k=2}^\ell\sum\limits_{p\in\Omega}\nu_{e_kpe_1}e_kpe_1e_1^* + \\&&+ \sum\limits_{k=2}^\ell\sum\limits_{p\in\Omega}\nu_{e_1pe_k}\Bigl(e_1pe_ke_1^* - pe_k\Bigr) +
\sum\limits_{k=2}^\ell\sum\limits_{r=2}^\ell\sum\limits_{p\in\Omega}\nu_{e_kpe_r}e_kpe_re_1^*+ \\&&+\sum\limits_{p\in\Omega}\nu'_p\Bigl((e_1p)^* - (pe_1)^*\Bigr) + \sum\limits_{e_1h^*\in\mathfrak{M}}\nu''_{e_1h^*}\Bigl(e_1(e_1h)^* - h^*\Bigr) + \sum\limits_{k=2}^\ell\sum\limits_{h\in\Omega}\nu''_{e_kh^*}e_k(e_1h)^*
+\\&&+\sum\limits_{wh^*\in\mathfrak{M}}\nu''_{e_1wh^*}\Bigl(e_1w(e_1h)^* - wh^*\Bigr)+
\sum\limits_{k=2}^\ell\sum\limits_{wh^*\in\mathfrak{M}}\nu''_{e_kwh^*}e_kw(e_1h)^*.
\end{eqnarray*}

\smallskip

\par Using the equation $e_1e_1^* = v -\sum\limits_{k=2}^\ell e_ke_k^*$, we get:
\begin{eqnarray*}
A(e_1^*) &=& -\sum\limits_{k=2}^\ell\nu_{e_1}e_ke_k^* + \sum\limits_{k=2}^\ell\nu_{e_k}e_ke_1^* - \sum\limits_{k=2}^\ell \nu_{e_1e_1}e_1e_ke_k^* +\sum\limits_{k=2}^\ell\nu_{e_ke_1}e_k -\sum\limits_{k=2}^\ell\sum\limits_{r=2}^\ell\nu_{e_ke_1}e_ke_re_r^*+\\&&+ \sum\limits_{k=2}^\ell\nu_{e_1e_k}e_1e_ke_1^* - \sum\limits_{k=2}^\ell\nu_{e_1e_k}e_k + \sum\limits_{k=2}^\ell\sum\limits_{r=2}^\ell\nu_{e_ke_r}e_ke_re_1^*
+\sum\limits_{p\in\Omega}\nu_{e_1pe_1}\Bigl(e_1p - pe_1\Bigr) - \\&&-
\sum\limits_{k=2}^\ell\sum\limits_{p\in\Omega}\nu_{e_1pe_1}e_1pe_ke_k^* + \sum\limits_{k=2}^\ell\sum\limits_{p\in\Omega}\nu_{e_kpe_1}e_kp -
\sum\limits_{k=2}^\ell\sum\limits_{r=2}^\ell\sum\limits_{p\in\Omega}\nu_{e_kpe_1}e_kpe_re_r^*+\\&&+
\sum\limits_{k=2}^\ell\sum\limits_{p\in\Omega}\nu_{e_1pe_k}\Bigl(e_1pe_ke_1^* - pe_k\Bigr) +
\sum\limits_{k=2}^\ell\sum\limits_{r=2}^\ell\sum\limits_{p\in\Omega}\nu_{e_kpe_r}e_kpe_re_1^*+ \\&&+\sum\limits_{p\in\Omega}\nu'_p\Bigl((e_1p)^* - (pe_1)^*\Bigr) + \sum\limits_{e_1h^*\in\mathfrak{M}}\nu''_{e_1h^*}\Bigl(e_1(e_1h)^* - h^*\Bigr) +\sum\limits_{k=2}^\ell\sum\limits_{h\in\Omega}\nu''_{e_kh^*}e_k(e_1h)^*+\\
&&+\sum\limits_{wh^*\in\mathfrak{M}}\nu''_{e_1wh^*}\Bigl(e_1w(e_1h)^* - wh^*\Bigr)+
\sum\limits_{k=2}^\ell\sum\limits_{wh^*\in\mathfrak{M}}\nu''_{e_kwh^*}e_kw(e_1h)^*.
\end{eqnarray*}

\smallskip

\par Let us add up similar terms:

\begin{eqnarray*}
\Bigl.A(e_1^*)\Bigr|_\Omega & = & \sum\limits_{k=2}^\ell \nu_{e_ke_1}e_k-\sum\limits_{k=2}^\ell\nu_{e_1e_k}e_k + \sum\limits_{p\in\Omega}\nu_{e_1pe_1}\Bigl(e_1p-pe_1\Bigr) + \sum\limits_{k=2}^\ell\sum\limits_{p\in\Omega}\nu_{e_kpe_1}e_kp -\\&&- \sum\limits_{k=2}^\ell\sum\limits_{p\in\Omega}\nu_{e_1pe_k}pe_k = \\
&=& \sum\limits_{k=2}^\ell\Bigl(\nu_{e_ke_1} -\nu_{e_1e_k}\Bigr)e_k + \sum\limits_{k=2}^\ell\Bigl(\nu_{e_1e_ke_1}-\nu_{e_1e_1e_k}\Bigr)e_1e_k + \sum\limits_{k=2}^\ell\Bigl(\nu_{e_ke_1e_1} - \nu_{e_1e_ke_1}\Bigr)e_ke_1+\\&&
+\sum\limits_{k=2}^\ell\sum\limits_{r=2}^\ell\Bigl(\nu_{e_ke_re_1} - \nu_{e_1e_ke_r}\Bigr)e_ke_r+
\sum\limits_{k=2}^\ell\sum\limits_{p\in\Omega}\Bigl(\nu_{e_1pe_ke_1} - \nu_{e_1e_1pe_k}\Bigr)e_1pe_k +\\
&&+ \sum\limits_{k=2}^\ell\sum\limits_{p\in\Omega}\Bigl(\nu_{e_kpe_1e_1} -\nu_{e_1e_kpe_1}\Bigr)e_kpe_1 + \sum\limits_{k=2}^\ell\sum\limits_{r=2}^\ell\sum\limits_{p\in\Omega}\Bigl(\nu_{e_kpe_re_1} - \nu_{e_1e_kpe_r}\Bigr)e_kpe_r=\\
&=& \sum\limits_{k=2}^\ell\Bigl(\nu_{e_ke_1} -\nu_{e_1e_k}\Bigr)e_k + \sum\limits_{k=1}^\ell\sum\limits_{r=1}^\ell\Bigl(\nu_{e_ke_re_1} - \nu_{e_1e_ke_r}\Bigr)e_ke_r+\\&&+\sum\limits_{k=1}^\ell\sum\limits_{r=1}^\ell\sum\limits_{p\in\Omega}\Bigl(\nu_{e_kpe_re_1} - \nu_{e_1e_kpe_r}\Bigr)e_kpe_r.
\end{eqnarray*}

\smallskip

\par It follows that there are not terms of form $e_1pe_1$, i.e.,
\[
\beta_{e_1pe_1}(e_1^*) = 0.
\]

\smallskip

\par Further, we have:

\begin{eqnarray*}
\Bigl.A(e_1^*)\Bigr|_{\Omega^*} & = & \sum\limits_{p\in\Omega}\nu'_p\Bigl((e_1p)^*-(pe_1)^*\Bigr) - \sum\limits_{e_1h^*\in\mathfrak{M}}\nu''_{e_1h^*}h^*=\\&=&
-\sum\limits_{k=2}^\ell\nu''_{e_1e_k^*}e_k^* -\sum\limits_{k=2}^\ell\nu'_{e_k}(e_ke_1)^* +
\sum\limits_{k=2}^\ell\Bigl(\nu'_{e_k} - \nu''_{e_1(e_1e_k)^*}\Bigr)(e_1e_k)^*-\sum\limits_{k=2}^\ell\sum\limits_{r=2}^\ell\nu''_{e_1(e_ke_r)^*}(e_ke_r)^*+\\
&&+\sum\limits_{p\in\Omega}\Bigl(\nu'_{pe_1} - \nu'_{e_1p}\Bigr)(e_1pe_1)^* -\sum\limits_{k=2}^\ell\sum\limits_{p\in\Omega}\nu'_{e_kp}(e_kpe_1)^* + \Bigl(\nu'_{pe_k} - \nu''_{e_1(e_1pe_k)^*}\Bigr)(e_1pe_k)^* -\\&&-\sum\limits_{k=2}^\ell\sum\limits_{r=2}^\ell\sum\limits_{p\in\Omega}\nu''_{e_1(e_rpe_k)^*}(e_rpe_k)^*,
\end{eqnarray*}
it follows that there are no terms of form $e_1^*p^*e_1^*$, i.e.,
\[
\gamma_{e_1pe_1}(e_1^*)  = 0.
\]

\smallskip

\par Further, we have:
\begin{eqnarray*}
\Bigl.A(e_1^*)\Bigr|_\mathfrak{M} & = & -\sum\limits_{k=2}^\ell\nu_{e_1}e_ke_k^* + \sum\limits_{k=2}^\ell\nu_{e_k}e_ke_1^* - \sum\limits_{k=2}^\ell \nu_{e_1e_1}e_1e_ke_k^* -\sum\limits_{k=2}^\ell\sum\limits_{r=2}^\ell\nu_{e_ke_1}e_ke_re_r^*+\\&&+ \sum\limits_{k=2}^\ell\nu_{e_1e_k}e_1e_ke_1^* +\sum\limits_{k=2}^\ell\sum\limits_{r=2}^\ell\nu_{e_ke_r}e_ke_re_1^*-
\sum\limits_{k=2}^\ell\sum\limits_{p\in\Omega}\nu_{e_1pe_1}e_1pe_ke_k^* -\\&&
-\sum\limits_{k=2}^\ell\sum\limits_{r=2}^\ell\sum\limits_{p\in\Omega}\nu_{e_kpe_1}e_kpe_re_r^*+
\sum\limits_{k=2}^\ell\sum\limits_{p\in\Omega}\nu_{e_1pe_k}e_1pe_ke_1^* +
\sum\limits_{k=2}^\ell\sum\limits_{r=2}^\ell\sum\limits_{p\in\Omega}\nu_{e_kpe_r}e_kpe_re_1^*+ \\ &&+\sum\limits_{e_1h^*\in\mathfrak{M}}\nu''_{e_1h^*}e_1(e_1h)^*+
\sum\limits_{k=2}^\ell\sum\limits_{h\in\Omega}\nu''_{e_kh^*}e_k(e_1h)^* +\\&&+
\sum\limits_{wh^*\in\mathfrak{M}}\nu''_{e_1wh^*}\Bigl(e_1w(e_1h)^* - wh^*\Bigr)+
\sum\limits_{k=2}^\ell\sum\limits_{wh^*\in\mathfrak{M}}\nu''_{e_kwh^*}e_kw(e_1h)^*.
\end{eqnarray*}

\smallskip

\par Let us add up similar terms:
\begin{eqnarray*}
\Bigl.A(e_1^*)\Bigr|_\mathfrak{M} & = & \sum\limits_{k=2}^\ell\Bigl(-\nu_{e_1} - \nu''_{e_1e_ke_k^*}\Bigr)e_ke_k^* +\sum\limits_{k=2}^\ell\Bigl(\nu_{e_k} -\nu''_{e_1e_ke_1^*}\Bigr)e_ke_1^*+\\&&+
\sum\limits_{k=1}^\ell\sum\limits_{r=2}^\ell\Bigl(-\nu_{e_ke_1} - \nu''_{e_1e_ke_re_r^*}\Bigr)e_ke_re_r^* +\sum\limits_{k=1}^\ell\sum\limits_{r=2}^\ell\Bigl(\nu_{e_ke_r}-\nu''_{e_1e_ke_re_1^*}\Bigr)e_ke_re_1^* +\\&&
+\sum\limits_{k=1}^\ell\sum\limits_{r=2}^\ell\sum\limits_{p\in\Omega}\Bigl(-\nu_{e_kpe_1}-
\nu''_{e_1e_kpe_re_r^*}\Bigr)e_kpe_re_r^*
+\sum\limits_{k=1}^\ell\sum\limits_{r=2}^\ell\sum\limits_{p\in\Omega}\Bigl(\nu_{e_kpe_r} - \nu''_{e_1e_kpe_re_1^*}\Bigr)e_kpe_re_1^* + \\&& +
\sum\limits_{k=1}^\ell\sum\limits_{e_1h^*\in\mathfrak{M}}\Bigl(\nu''_{e_kh^*}-\nu''_{e_1e_k(e_1h)^*}\Bigr)e_k(e_1h)^*+
\sum\limits_{k=1}^\ell\sum\limits_{wh^*\in\mathfrak{M}}\Bigl(\nu''_{e_kwh^*}-\nu''_{e_1e_kw(e_1h)^*}\Bigr)e_kw(e_1h)^*-\\&&-
\sum\limits_{wh^*\in\mathfrak{M},h_0 \ne e_1}\nu''_{e_1wh^*}wh^*,
\end{eqnarray*}
we see there are no zero terms.

\smallskip

\par 4) For $e_r^*\in E^*$, $2 \le r \le \ell$, we have:

\begin{eqnarray*}
A(e_r^*) &=& \sum\limits_{p\in\Omega}\nu_p\Bigl(pe_r^* - \delta_{p_0,e_r}(p/p_0)\Bigr)+ \sum\limits_{p\in\Omega}\nu'_p\Bigl((e_rp)^* - (pe_r)^*\Bigr)+\\&&+
\sum\limits_{wh^*\in\mathfrak{M}}\nu''_{wh^*}\Bigl(w(e_rh)^* - \delta_{w_0,e_r}(w/w_0)h^*\Bigr)=\\&=&
\nu_{e_r}\Bigl(e_re_r^* - v\Bigr) + \sum\limits_{k=1,k\ne r}^\ell\nu_{e_k}e_ke_r^* + \\&& +\sum\limits_{p\in\Omega}\nu_{e_rp}\Bigl(e_rpe_r^* - p\Bigr) +\sum\limits_{k=1,k\ne r}^\ell\sum\limits_{p\in\Omega}\nu_{e_kp}e_kpe_r^*+\\&&+
\sum\limits_{p\in\Omega}\nu'_p\Bigl((e_rp)^* - (pe_r)^*\Bigr) + \sum\limits_{h\in\Omega}\nu''_{e_rh^*}\Bigl(e_r(e_rh)^* - h^*\Bigr) + \\&&+
\sum\limits_{k=1,k\ne r}^\ell\nu''_{e_kh^*}e_k(e_rh)^*+\sum\limits_{wh^*\in\mathfrak{M}}\nu''_{e_rwh^*}\Bigl(e_rw(e_rh)^* - wh^*\Bigr)+\\&&+\sum\limits_{k=1,k\ne r}^\ell\sum\limits_{wh^*\in\mathfrak{M}}\nu''_{e_kwh^*}e_kw(e_rh)^*.
\end{eqnarray*}

\smallskip

\par Let us add up similar terms:
\begin{eqnarray*}
\left.A(e_r^*)\right|_{\{v\}} &=& - \nu_{e_r},\\
\left.A(e_r^*)\right|_\Omega &=& - \sum\limits_{p\in\Omega}\nu_{e_rp}p,\\
\left.A(e_r^*)\right|_{\Omega^*} &=& \sum\limits_{p\in\Omega}\nu_{p}'\Bigl((e_rp)^* - (pe_r)^*\Bigr) - \sum\limits_{h\in\Omega}\nu''_{e_rh^*}h^*=\\&=&
-\sum\limits_{k=1}^\ell\nu''_{e_re_k^*}e_k^* + \sum\limits_{k=1,k\ne r}^\ell\Bigl(-\nu'_{e_k} - \nu''_{e_r(e_ke_r)^*}\Bigr)(e_ke_r)^* + \sum\limits_{k=1,k\ne r}^\ell\Bigl(\nu'_{e_k} - \nu''_{e_r(e_re_k)^*}\Bigr)(e_re_k)^*-\\&&-
\nu''_{e_r(e_re_r)^*}(e_re_r)^* - \sum\limits_{k=1,k\ne r}^\ell\sum\limits_{t=1,t \ne r}^\ell\nu''_{e_r(e_ke_t)^*}(e_ke_t)^* + \sum\limits_{p\in\Omega}\Bigl(\nu'_{pe_r} - \nu'_{e_rp} - \nu''_{e_r(e_rpe_r)^*}\Bigr)(e_rpe_r)^*+\\&&+
\sum\limits_{k=1,k\ne r}^\ell\sum\limits_{p\in\Omega}\Bigl(-\nu'_{e_kp} -\nu''_{e_r(e_kpe_r)^*}\Bigr)(e_kpe_r)^* + \sum\limits_{k=1,k\ne r}^\ell\sum\limits_{p\in\Omega}\Bigl(\nu'_{pe_k} - \nu''_{e_r(e_rpe_k)^*}\Bigr)(e_rpe_k)^*-\\&&-\sum\limits_{k=1,k\ne r}^\ell\sum\limits_{t=1,t\ne r}^\ell\sum\limits_{p\in\Omega}\nu''_{e_r(e_kpe_t)^*},
\end{eqnarray*}
and
\begin{eqnarray*}
\left.A(e_r^*)\right|_\mathfrak{M} &=& \sum\limits_{k=1}^\ell\nu_{e_k}e_ke_r^* + \sum\limits_{k=1}^\ell\sum\limits_{p\in\Omega}\nu_{e_kp}e_kpe_r^* + \\&&+
\sum\limits_{k=1}^\ell\sum\limits_{h\in\Omega}\nu''_{e_kh^*}e_k(e_rh)^* + \sum\limits_{k=1}^\ell\sum\limits_{wh^*\in\mathfrak{M}}\nu''_{e_kwh^*}e_kw(e_rh)^* -\\&&-
\sum\limits_{wh^*\in\mathfrak{M}}\nu''_{e_rwh^*}wh^*=\\&=&
\sum\limits_{k=1}^\ell\Bigl(\nu_{e_k} -\nu''_{e_re_ke_r^*}\Bigr)e_ke_r^* + \sum\limits_{k=1}^\ell\sum\limits_{p\in\Omega}\Bigl(\nu_{e_kp}-\nu'_{e_re_kpe_r^*}\Bigr)e_kpe_r^* +\\&&+\sum\limits_{k=1}^\ell\sum\limits_{p\in\Omega}\Bigl(\nu''_{e_kp^*} - \nu''_{e_re_k(e_rp)^*}\Bigr)e_k(e_rp)^* + \sum\limits_{k=1}^\ell\sum\limits_{wh^*\in\mathfrak{M}}\Bigl(\nu''_{e_kwh^*} - \nu''_{e_re_kw(e_rh)^*}\Bigr)e_kw(e_rh)^* -\\&&-\sum\limits_{k=1,k\ne r}^\ell\sum\limits_{wh^*\in\mathfrak{M}}\nu''_{e_rw(e_kh)^*}w(e_kh)^*,
\end{eqnarray*}
we see there are no zero terms. It completes the proof.
\end{proof}

\smallskip

\par As corollary of this Theorem follows the full description of all outer derivations of the Leavitt path algebra $W(\ell)$.

\smallskip

\begin{theorem}\label{outerder}
Any outer derivation $\mathscr{D}$ of the Leavitt path algebra $W(\ell)$ can be described as follows:
\[
\mathscr{D}(x) = \begin{cases}0, \mbox{ if $x =v$},\\
\alpha_v(x)v + \sum\limits_{p\in\Omega}\Bigl(\beta_p(x)p+\gamma_p(x)p^*\Bigr)+\sum\limits_{wh^*\in\mathfrak{M}}\rho_{wh^*}(x)wh^*,\mbox{ if $x \in E\cup E^*$,}
\end{cases}
\]
where almost all scalars $\alpha(x),\beta(x), \gamma(x), \rho(x)\in R$ are zero and they satisfy the following equations,
\begin{align*}
&\gamma_{e_j}(e_i^*) + \beta_{e_i}(e_j) = 0,\\
&\beta_p(e_i^*) + (1-\delta_{1,j})\rho_{pe_je_j^*}(e_i^*) + \beta_{e_ipe_j}(e_j) = 0,& p\in \Omega,\\
&\rho_{pe_j^*}(e_i^*) + \beta_{e_ip}(e_j) = 0, & p\in\Omega,\,p_z \ne e_j, \\
&\alpha_v(e_i^*) +(1-\delta_{1,j})\rho_{e_je_j^*}(e_i^*) + \beta_{e_ie_j}(e_j) = 0,\\
&\gamma_{e_jpe_i}(e_i^*) + \gamma_p(e_j) + (1-\delta_{1,i})\rho_{e_i(pe_i)^*}(e_j) = 0, & p\in \Omega,\\
& \gamma_{e_jp}(e_i^*)+\rho_{e_ip^*}(e_j)=0,&p\in\Omega,\,p_z \ne e_i,\\
&\alpha_v(e_j)+\gamma_{e_je_i}(e_i^*) + (1-\delta_{1,i})\rho_{e_ie_i^*}(e_j) = 0,\\
&\rho_{w(e_jh)^*}(e_i^*) + \rho_{e_iwh^*}(e_j)=0,& wh^* \in\mathfrak{M},
\end{align*}
for any $1\le i,j\le\ell$, $p\in\Omega$, and at least one of the following scalars $\beta_{e_1pe_1}(e_1)$, $\beta_{e_1pe_1}(e_1^*)$, $\gamma_{e_1pe_1}(e_1)$, $\gamma_{e_1pe_1}(e_1^*)$ are not zero.
\end{theorem}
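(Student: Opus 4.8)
The plan is to obtain Theorem \ref{outerder} as a direct consequence of Theorems \ref{genth} and \ref{innerder}, using the definition that an outer derivation is a derivation which does not belong to $\mathrm{InnDer}(W(\ell))$, i.e.\ a representative of a nonzero class in $\mathrm{Der}(W(\ell))/\mathrm{InnDer}(W(\ell))$. Since Theorem \ref{genth} already shows that \emph{every} derivation $\mathscr{D}$ of $W(\ell)$ has the displayed form and satisfies the system \eqref{e*e1}--\eqref{e*e8}, these eight equations must be carried over verbatim into the statement; the only genuinely new ingredient is the clause asserting that at least one of the four distinguished scalars $\beta_{e_1pe_1}(e_1)$, $\beta_{e_1pe_1}(e_1^*)$, $\gamma_{e_1pe_1}(e_1)$, $\gamma_{e_1pe_1}(e_1^*)$ fails to vanish.

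First I would record the easy half. By Theorem \ref{innerder}, every inner derivation satisfies, in addition to \eqref{e*e1}--\eqref{e*e8}, the vanishing conditions
\[
\beta_{e_1pe_1}(e_1) = \beta_{e_1pe_1}(e_1^*) = \gamma_{e_1pe_1}(e_1) = \gamma_{e_1pe_1}(e_1^*) = 0
\]
for all $p \in \Omega$. Consequently, if a derivation $\mathscr{D}$ has even one of these scalars nonzero for some $p$, it cannot agree with any inner derivation and is therefore outer. This yields one inclusion immediately.

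The substantive part is the converse, namely that the simultaneous vanishing of all four families forces $\mathscr{D}$ to be inner; equivalently, I must upgrade Theorem \ref{innerder} from a necessary condition to a full characterization of $\mathrm{InnDer}(W(\ell))$. For this I would treat the explicit expansions of $A(e_1)$, $A(e_r)$, $A(e_1^*)$, $A(e_r^*)$ obtained in the proof of Theorem \ref{innerder} as a linear system expressing the coefficients $\alpha,\beta,\gamma,\rho$ of an inner derivation in terms of the free parameters $\nu_p$, $\nu_p'$, $\nu_{wh^*}''$, and then invert it: given a solution of \eqref{e*e1}--\eqref{e*e8} whose four distinguished families are identically zero, I would solve for the $\nu$'s by induction on the length of the indexing path and verify that the resulting operator $\sum_{p}(\nu_p\,\mathrm{ad}_p + \nu_p'\,\mathrm{ad}_{p^*}) + \sum_{wh^*}\nu_{wh^*}''\,\mathrm{ad}_{wh^*}$ reproduces $\mathscr{D}$ on every generator.

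I expect this inversion to be the main obstacle: one has to match the coefficient families across the four generator types $e_1,e_r,e_1^*,e_r^*$ simultaneously and check that the elimination of the special edge via $e_1e_1^* = v - \sum_{k=2}^\ell e_ke_k^*$, which is used repeatedly in the proof of Theorem \ref{innerder}, does not create an obstruction to solvability. Once membership in $\mathrm{InnDer}(W(\ell))$ is shown to be equivalent to the vanishing of the four families, the theorem follows: the outer derivations are precisely the solutions of \eqref{e*e1}--\eqref{e*e8} for which at least one of $\beta_{e_1pe_1}(e_1)$, $\beta_{e_1pe_1}(e_1^*)$, $\gamma_{e_1pe_1}(e_1)$, $\gamma_{e_1pe_1}(e_1^*)$ is nonzero.
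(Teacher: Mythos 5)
Your overall route is the same as the paper's: Theorem \ref{outerder} is deduced from Theorems \ref{genth} and \ref{innerder}, with the eight equations carried over from Theorem \ref{genth} and the clause about the four distinguished scalars coming from the two extra vanishing conditions in Theorem \ref{innerder}. The half you call ``easy'' is sound and is in fact all that the paper's one-line proof actually delivers: by Theorem \ref{innerder} an inner derivation has $\beta_{e_1pe_1}(e_1)=\beta_{e_1pe_1}(e_1^*)=\gamma_{e_1pe_1}(e_1)=\gamma_{e_1pe_1}(e_1^*)=0$ for all $p\in\Omega$, so a derivation with one of these scalars nonzero cannot be inner.

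However, Theorem \ref{outerder} asserts more: that \emph{every} outer derivation has at least one of the four scalars nonzero, equivalently that a derivation for which all four families vanish is necessarily inner. As you correctly observe, this is the converse of Theorem \ref{innerder} --- the sufficiency of the listed conditions for membership in $\mathrm{InnDer}(W(\ell))$ --- and it is not established anywhere: not in the paper, whose proof consists of the sentence ``It immediately follows from Theorem \ref{genth} and Theorem \ref{innerder},'' and not in your proposal, where you only outline a plan to invert the linear system expressing $\alpha,\beta,\gamma,\rho$ in terms of the parameters $\nu_p$, $\nu_p'$, $\nu_{wh^*}''$ and explicitly flag that inversion as ``the main obstacle.'' Until that inversion (or some other argument showing that every solution of \eqref{e*e1}--\eqref{e*e8} with the four families identically zero is realized by $\sum_p(\nu_p\,\mathrm{ad}_p+\nu_p'\,\mathrm{ad}_{p^*})+\sum_{wh^*}\nu_{wh^*}''\,\mathrm{ad}_{wh^*}$) is actually carried out, the ``only if'' half of Theorem \ref{outerder} remains unproved. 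Your diagnosis of the logical structure is sharper than the paper's own one-line justification, but the proposal as written stops exactly at the step that carries the mathematical content.
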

\begin{proof}
It immediately follows from Theorem \ref{genth} and Theorem \ref{innerder}.
\end{proof}

\smallskip

\paragraph{Acknowledgements.} The author would like to express his deepest gratitude to Professor Leonid A. Bokut, who has drawn the author's attention to this work. I am also extremely indebted to my friend my Chinese Brother Zhang Junhuai for great support, without which the author's life would be very difficult.

\end{document}